\numberwithin{equation}{section}
\newcommand{\R}{\mathbb{R}}
\newtheorem{theorem}{Theorem}[section]
\newtheorem{lemma}[theorem]{Lemma}
\newtheorem{conjecture}[theorem]{Conjecture}
\newtheorem{proposition}[theorem]{Proposition}
\theoremstyle{definition}
\newtheorem{remark}[theorem]{Remark}
\newcommand{\Extend}[5]{\ext@arrow0099{\arrowfill@#1#2#3}{#4}{#5}}
\begin{document}
\title[Restriction estimates]{A Restriction estimate for a certain surface of finite type in $\mathbb{R}^3$}

%
%

\author{Zhuoran Li}
\address{The Graduate School of China Academy of Engineering Physics, P. O. Box 2101,\ Beijing,\ China,\ 100088 ;}
\email{lizhuoran18@gscaep.ac.cn}

\author{Changxing Miao}
\address{Institute of Applied Physics and Computational Mathematics, Beijing 100088}
\email{miao\_changxing@iapcm.ac.cn}

\author{Jiqiang Zheng}
\address{Institute of Applied Physics and Computational Mathematics, Beijing 100088}
\email{zhengjiqiang@gmail.com and zheng\_jiqiang@iapcm.ac.cn}

\begin{abstract}
In this paper,  we study the restriction estimate for a certain surface of finite type in $\R^3$, and partially improves the results of Buschenhenke-M\"{u}ller-Vargas \cite{BMV}. The key ingredients of the proof include the so called generalized rescaling technique based on a decomposition adapted to finite type geometry,  a decoupling inequality and  reduction of  dimension arguments.

\end{abstract}

\maketitle

\begin{center}
 \begin{minipage}{120mm}
   { \small {{\bf Key Words:}  Restriction estimate,  finite type, decoupling inequality,
    wave packet decomposition, square function  and  Kakeya-type estimate.}
      {}
   }\\
    { \small {\bf AMS Classification: 42B10 }
      {}
      }
 \end{minipage}
 \end{center}





\section{Introduction  and  main result}

\noindent

Broadly speaking, the theme of restriction problems is to study the properties of Fourier transforms of measures supported on curved manifolds. It is intimately connected with geometric measure theory and incidence geometry, and has led to important developments in dispersive partial differential equations and number theory.

Let $S$ be a given smooth compact hypersurface in $\mathbb{R}^n$, and $d\sigma$ is its induced Lebesgue measure. The Fourier restriction problem for $S$, proposed by E. Stein in an unpublished work dating back to the late 1960s(see also \cite{Stein}), asks for the range of exponents $p$ and $q$ for which the inequality
\begin{equation}\label{equ:restriction}
  \Big( \int_{S}\vert \widehat{f} \vert^q d\sigma \Big)^{\frac1q} \leq C \Vert f \Vert_{L^p(\mathbb{R}^n)}
\end{equation}
holds true for each $f\in \mathcal{S}(\mathbb{R}^n)$, with a constant $C=C(p,q,S)$ independent of $f$.

There had been a vast amount of work on this problem by many mathematicians in the 1970s and 1980s.
For $n=2$, the sharp estimate for curves  with non-vanishing
curvature had been solved  by C. Fefferman, E. Stein and A. Zygmund \cite{F70,Z74}. In higher dimensions, the sharp $L^p$-$L^2$ estimate  for hypersurfaces with non-vanishing Gaussian curvature was obtained by E.Stein and P.Tomas \cite{Stein86, Tomas}, see also R. Strichartz \cite{Str77}.

 The problem on general $L^p$-$L^q$ restriction estimates appears to be an extremely open problem.
 The important progress has been made by many pre-eminent mathematicians in 1990s,  and major new ideas were introduced by J. Bourgain \cite{Bo91,Bo95}, which led to many modern methods such as wave packet decomposition, induction on scales etc.
 In practice,  we  usually write \eqref{equ:restriction} in its adjoint form:
\begin{equation*}
  \| \mathcal E_{S}g \|_{L^{p'}(\mathbb{R}^n)}\leq C\|g\|_{L^{q'}(S, d\sigma)},\;\;\; p'=\tfrac{p}{p-1},\; q'=\tfrac{q}{q-1},
\end{equation*}
where
\[\mathcal E_{S}g(x):=\int_{S}g(\omega)e^{2\pi i x\cdot \omega}d\sigma,\]
denotes the standard Fourier extension operator associated with the hypersurface $S$. For the surfaces with positive definite second fundamental
form in $\mathbb{R}^3$ such as paraboloid and sphere, Stein's conjecture can be stated as
\begin{equation}\label{equ:parglo}
  \| \mathcal E_{S}g \|_{L^{p}(\mathbb{R}^3)}\leq C\|g\|_{L^{\infty}(S,d\sigma)},
\end{equation}
for all $p>3$. By the standard $\varepsilon$-removal argument in \cite{BG, Tao99}, \eqref{equ:parglo} reduces to a local version which would be stated as:
\begin{conjecture}[Local version on restriction conjecture in 3D]\label{con:local3d}
Let $S$ be a smooth compact surface in $\R^3$ with positive definite second fundamental form.
For any given $\varepsilon >0$,  there holds
\begin{equation}\label{equ:parloc}
  \| \mathcal E_{S}g \|_{L^{p}(B_{R})}\leq C(\varepsilon)R^{\varepsilon}\|g\|_{L^{\infty}(S,d\sigma)}, \;\;\;
  R\ge1,
\end{equation}
for all $p >3$. Here $B_R$ denotes a ball centred at the origin with radius $R$ in $\R^3$.
\end{conjecture}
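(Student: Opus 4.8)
The plan is to establish \eqref{equ:parloc} by Guth's polynomial partitioning method, organised around a Bourgain--Guth broad--narrow decomposition and the $\ell^2$-decoupling inequality of Bourgain--Demeter for the paraboloid, and run as a simultaneous induction on the radius $R$ and on the spatial dimension of the support of the relevant wave packets. After a wave packet decomposition of $\mathcal E_S g$ at scale $R^{-1/2}$, the quantity $\|\mathcal E_S g\|_{L^p(B_R)}$ is split into a \emph{narrow} part and a \emph{broad} part relative to a large parameter $K$: on a ball $B_{K^2}\subset B_R$ either $|\mathcal E_S g|$ is controlled up to $K^{O(1)}$ by $O(1)$ caps whose normals span at most a two-dimensional cone, or at least three caps with quantitatively transverse normals contribute comparably. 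In the narrow case one projects onto the spanned plane, applies the sharp two-dimensional (or cylindrical) restriction estimate together with parabolic rescaling on each $K^{-1}$-cap, and uses $\ell^2$-decoupling to convert the flat behaviour into an acceptable power of $K$ with no loss in $R$; the narrow estimate then follows by summing over caps and iterating.

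For the broad part one invokes polynomial partitioning. By the polynomial ham-sandwich theorem choose $P$ of controlled degree $D$ so that $Z(P)$ partitions $B_R$ into $\sim D^3$ open cells $O_i$ on each of which $\int_{O_i}|\mathcal E_S g|^p$ is comparable. A wave packet either meets $O(D)$ cells (a \emph{cellular} packet) or lies in the $R^{1/2+\delta}$-neighbourhood $\mathcal N$ of $Z(P)$ (a \emph{wall} packet). For the cellular packets, restricting $g$ to the packets meeting a fixed cell and applying the inductive hypothesis at scale $R$ on that cell, then summing, produces a gain of a factor $D^{3/p-1}<1$ from the cell count; choosing $D$ a suitable power of $\log R$ makes the induction on $R$ close for this term.

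The wall contribution is where I expect the essential difficulty. The wall packets are essentially tangent to the algebraic surface $Z(P)$; covering $\mathcal N$ by balls $B_\rho$ with $\rho=R^{1-\delta}$, on each such ball one iterates the construction: either the mass is again broad at a finer cap scale and one repeats the partitioning (the \emph{algebraic} alternative), or the tangential packets concentrate in the $\rho^{1/2+\delta}$-neighbourhood of an algebraic curve, where the matter reduces to a genuinely two-dimensional restriction problem for which the sharp $L^4$ orthogonality and square-function bounds are available. Reaching the \emph{full} range $p>3$ forces one to account, over the entire nested iteration, for the transverse concentration of tangential wave packets near a two-dimensional variety, extracting the full curvature of $S$ in the direction normal to $Z(P)$ while accumulating no more than a sub-polynomial factor in $R$. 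This demands a refined decoupling / square-function estimate, uniform in the iteration depth, for the families of caps produced by the partitioning — precisely the step that is the main obstacle.

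Finally, with the narrow loss, the cellular gain and the wall estimate in hand, one optimises the parameters $K$, $D$, $\delta$, $\rho$ so that the bound $C(\varepsilon)R^\varepsilon$ at scale $R$ is reproduced from the same bound at all scales $<R$; the base case $R=O(1)$ is trivial, and the standard $\varepsilon$-removal argument upgrades \eqref{equ:parloc} to the global estimate \eqref{equ:parglo} for every $p>3$, establishing Conjecture \ref{con:local3d}.
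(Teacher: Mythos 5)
The statement you are asked to prove is stated in the paper as a \emph{conjecture}, not a theorem: the paper offers no proof of \eqref{equ:parloc} for all $p>3$, and indeed records the state of the art as partial results only (Tao for $p>3+\tfrac13$, Bourgain--Guth for $p>\tfrac{56}{17}$, Guth for $p>3.25$ via polynomial partitioning, and Wang for $p>3+\tfrac{3}{13}$ via brooms). So there is no ``paper proof'' to match; the honest benchmark is that the full range $p>3$ is open, and your proposal does not close it.

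Concretely, the gap is in your treatment of the wall (tangential) contribution. The broad--narrow reduction, the polynomial ham-sandwich partition into $\sim D^3$ cells, and the cellular induction are exactly Guth's scheme, and they are fine as far as they go; but the exponent restriction $p>3.25$ in Guth's theorem comes precisely from the term you describe, namely wave packets lying in the $R^{1/2+\delta}$-neighbourhood of $Z(P)$ and tangent to it. Your proposed remedy --- ``a refined decoupling / square-function estimate, uniform in the iteration depth, for the families of caps produced by the partitioning'' --- is not a known estimate; it is, as you yourself concede, ``precisely the step that is the main obstacle.'' Asserting its existence is equivalent to asserting the conjecture, so the induction on scales never closes for $p>3$: with the tools you actually invoke (Bourgain--Demeter $\ell^2$-decoupling, two-dimensional $L^4$ orthogonality, the tangent-to-variety dichotomy) the argument reproduces the known range $p>3.25$, or $p>3+\tfrac{3}{13}$ if one imports Wang's broom analysis, but nothing better. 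The final appeal to $\varepsilon$-removal is harmless but moot, since the local estimate it would upgrade has not been established. If your goal is the theorem actually proved in this paper (Theorem \ref{thm:main} for the finite-type surface $F^2_4$), the relevant route is entirely different: the decomposition of $[0,1]^2$ adapted to the degeneracy, generalized rescaling, the decoupling inequality of Proposition \ref{prop:decoupling}, the square function estimate, and the 2D Kakeya-type argument, combined with the known $p>3.25$ input on the non-degenerate part.
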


 Tao \cite{Tao} obtained the estimate \eqref{equ:parloc} for $p>3+\frac{1}{3}$ by making use of bilinear methods, which was introduced by Wolff \cite{Wolff}. Bourgain-Guth \cite{BG} proved \eqref{equ:parloc} for $p>\frac{56}{17}\approx 3.29$ by developing so-called Bourgain-Guth argument based on multilinear restriction theory established by Bennett, Carbery and Tao \cite{BCT}. Later on, Guth \cite{Guth} improved the range of $p$ to $p>3.25$  using polynomial partitioning, which combines Dvir's idea in the work \cite{D} on Kakeya problem in finite field with the argument of Guth-Katz \cite{GK} on incidence geometry. Recently, using brooms in the polynomial partitioning setting,  Wang \cite{Wang} refreshed  the range of integral exponents on restriction estimate \eqref{equ:parloc} to  $p>3+\frac{3}{13}$.

How about the restriction problem for general hypersurfaces without assumption on the curvature?
 Stein \cite{Stein93} first proposed the restriction problem for the surface of finite type in $\R^n$. In this direction, some partial results have been obtained by I. Ikromov, M. Kempe and D. M\"{u}ller \cite{IKM10, IM11, IM15}, and the sharp range of Stein-Tomas type restriction estimate has been determined for a large class of smooth finite type hypersurfaces, including all analytic hypersurfaces. For the pointwise  estimate of a class of oscillatory integrals related to dispersive equations under the geometric assumptions of finite type, we  refer to Chen-Miao-Yao\cite{CMY}.

 Our purpose in this work is to consider a certain model class of surfaces in $\R^3$ with varying curvature and study the associated  adjoint restriction estimates.
Consider the surfaces in $\mathbb{R}^3$ given by
\begin{equation*}
  F^2_m:=\Big\{(\xi_1,\xi_2,\xi_1^m+\xi_2^m): (\xi_1,\xi_2) \in [0,1]^2\Big\},
\end{equation*}
where $m\ge 2$ is an integer. For each subset $Q\subset [0,1]^{2}$,  we denote the  Fourier extension operator  on $F^2_m$ by
\begin{equation*}
  \mathcal E_{Q}g(x):= \int_Q g(\xi_1,\xi_2)e\big(x_1\xi_1+x_2\xi_2+x_3(\xi^m_1+\xi_2^m)\big)d\xi
\end{equation*}
where $e(t)=e^{2\pi i t}$. Our goal is to prove that
\begin{equation}\label{equ:goal}
  \|\mathcal E_{[0,1]^2}g\|_{L^{p}(B_R)}\leq C(\varepsilon)R^{\varepsilon}\|g\|_{L^{\infty}(F^2_m)}
\end{equation}
for certain $p$. In this direction, Buschenhenke, M\"{u}ller and Vargas first  considered the $L^{r}$-$L^{p}$ restriction problem for a class
  of finite type surfaces in \cite{BMV} based on bilinear methods, see Remark \ref{rem:BMV} below for a discussion.

For $m=2$, the surface $F^2_2$ is exactly paraboloid over the region $[0,1]^2$. Since paraboloid enjoys nice geometry properties such as all of its principle curvatures are nearly equal to one, we can run the standard parabolic rescaling techniques successfully in procession of induction on scales.  For more detailed analysis, one can refer to Subsection 2.2.

For $m>2$, the Gaussian curvature of the surface $F_m^2$ vanishes when $\xi_1=0$ or $\xi_2=0$. From now on, we focus on the surface $F^2_4$, which  is degenerate on certain submanifolds with codimension two. However, to our surprise, such a degenerate surface might possess the same conjectured range of $p$ as in the paraboloid case such that $L^{\infty}$-$L^{p}$ restriction estimate holds in the setting of extension operator. We remark that $L^{r}$-$L^3$ restriction estimate holds true for the surface $F^2_4$ only if $r=\infty$, which differs from the paraboloid case.

In the setting of $L^r$-$L^p$ estimate, the necessary conditions on restriction inequality
for the surface $F^2_4$ are stated as following:
\begin{equation*}
\|\mathcal E_{[0,1]^2}g\|_{L^{p}(B_R)}\leq C(\varepsilon)R^{\varepsilon}\|g\|_{L^{r}(F^2_4)}
\end{equation*}
holds only if $p>\max\{3, 3r'\}$ or
\begin{equation*}
        p>3\;\;\text{and}\;\; p\ge\max\big\{r, 3r'\big\}.
\end{equation*}
For more systematic discussion on the necessary conditions, one can refer to the work in \cite{BMV,SS}.
\vskip0.2cm

Why do we focus on the case $m=4$?  We believe that it corresponds to interesting case which
is much different with $m\ge 6$. Firstly we make scaling analysis to see what happens.
Let $K$ be a large number with $1\ll K \ll R^\varepsilon$. Under the assumption that \eqref{equ:goal} holds, we have
\begin{equation*}
\|\mathcal E_{[0,1]^2}g\|_{L^p(B_{RK^{-\frac1m}})}\leq C(\varepsilon)\Big(\frac{R}{K^{\frac1m}}\Big)^{\varepsilon}\|g\|_{L^{\infty}(F^2_m)},
\end{equation*}
We pick out a small but typical region:
\begin{equation*}
  \Omega:=[0,K^{-\frac{1}{m}}]\times[0,K^{-\frac{1}{m}}].
\end{equation*}
  This piece of surface  defined on region $\Omega$ is similar to the whole one, but with a smaller scale. By the change of variables: $\xi=K^{-\frac1m}\eta$, we have
\begin{align*}
   \mathcal E_{\Omega}g(x):=& \int_{\Omega} g\big(\xi_1,\xi_2\big)e\Big(x_1\xi_1+x_2\xi_2+x_3\sum^{2}_{i=1}\xi^m_i\Big)d\xi\\
   =&\int_{[0,1]^2} \tilde{g}(\eta_1,\eta_2)e\Big(K^{-\frac{1}{m}}x_1\eta_1+K^{-\frac{1}{m}}x_2\eta_2+K^{-1}x_3\sum^{2}_{i=1}\eta^m_i\Big)d\eta\\
=& \big(\mathcal E_{[0,1]^2}\tilde{g}\big)(\tilde{x}),
\end{align*}
where
\begin{align*}
  \tilde{g}(\eta_1,\eta_2):=&K^{-\frac{2}{m}}g(K^{-\frac{1}{m}}\eta_1,K^{-\frac{1}{m}}\eta_2),
\end{align*}
and
\begin{equation*}
  \tilde{x}=(K^{-\frac{1}{m}}x_1,K^{-\frac{1}{m}}x_2,K^{-1}x_3).
\end{equation*}
Hence, we derive that
\begin{align*}
\|\mathcal E_{\Omega}{g}\|_{L^p(B_R)}\leq& K^{\frac{m+2}{pm}}\|\mathcal E_{[0,1]^2}\tilde{g}\|_{L^p(B_{\frac{R}{K^{1/m}}})}\\
\leq& C(\varepsilon)R^{\varepsilon}K^{\frac{m+2}{pm}-\frac{2}{m}-{\frac{1}{m}}\varepsilon}\|g\|_{L^{\infty}(F^2_m)}.
\end{align*}
Thus, it is reasonable to require $\frac{m+2}{pm}-\frac{2}{m}\leq0$, i.e. $p\geq\frac{m+2}{2}$ to
obtain the control estimate of $\mathcal E_{\Omega}{g}$.
If we want to prove \eqref{equ:goal} for $p\ge 3$, the same range as the conjecture of the paraboloid case, we should work under the assumption that $\frac{m+2}{2}\leq 3$, i.e. $m\leq 4$.
\vskip0.15cm

Now we state the main result.

\begin{theorem}\label{thm:main}
Let $p>3.3$. There holds
\begin{equation}\label{equ:goalm}
  \|\mathcal E_{[0,1]^2}g\|_{L^{p}(B_R)}\leq C(\varepsilon,p)R^{\varepsilon}\|g\|_{L^{\infty}(F^2_4)}.
\end{equation}

\end{theorem}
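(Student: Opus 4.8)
The plan is to prove the local estimate \eqref{equ:goalm} by an induction-on-scales argument, treating the degeneracy of $F^2_4$ along the axes $\{\xi_1=0\}$ and $\{\xi_2=0\}$ by a decomposition of $[0,1]^2$ adapted to the finite-type geometry. First I would split the square $[0,1]^2$ into dyadic ``slabs'' of the form $[2^{-j},2^{-j+1}]\times[2^{-k},2^{-k+1}]$ (together with the appropriate degenerate pieces where one or both coordinates are close to $0$). On each such slab the Hessian of $\xi_1^4+\xi_2^4$ is comparable to $\mathrm{diag}(2^{-2j},2^{-2k})$, so an anisotropic (generalized) rescaling $\xi_1\mapsto 2^{-j}\eta_1$, $\xi_2\mapsto 2^{-k}\eta_2$, together with the matching rescaling in the $x$-variables, turns the piece of $F^2_4$ over that slab into a graph whose second fundamental form is uniformly elliptic and of bounded geometry — morally a rescaled copy of a paraboloid-type surface over $[0,1]^2$ (or over a $1\times 2^{j-k}$ rectangle, which one further chops into squares). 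This is the step labelled ``generalized rescaling'' in the abstract; the scaling computation from the introduction, giving the constraint $p\ge \tfrac{m+2}{2}=3$ for $m=4$, is exactly the bookkeeping that makes this rescaling non-lossy for $p>3$.

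The second ingredient is to control the contribution of the near-diagonal slabs, where rescaling alone loses too much, by a decoupling inequality: summing the pieces over all dyadic slabs should be done in $\ell^p$ (or $\ell^2$, followed by interpolation) via a decoupling theorem for $F^2_4$, which in turn one deduces from the $\ell^2 L^p$ decoupling for the parabola / the two-dimensional decoupling of Bourgain–Demeter applied in each variable separately, or from the finite-type decoupling results available in the literature. Combined with the rescaled restriction estimate on each slab (which, being a bounded-geometry elliptic surface in $\R^3$, obeys the Bourgain–Guth/Guth/Wang-type bound for $p>3.25$, hence certainly for $p>3.3$), this reduces \eqref{equ:goalm} to summing a geometric series in the number of dyadic scales $\lesssim \log R$, producing only an $R^\varepsilon$ loss.

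The third ingredient, ``reduction of dimension,'' handles the genuinely degenerate pieces: when $\xi_1$ (say) is within $R^{-1/4}$ of $0$, the surface over that strip is, at scale $R$, indistinguishable from a cylinder over the curve $\xi_2\mapsto(\xi_2,\xi_2^4)$, so $\mathcal E_{Q}g$ factors (up to harmless errors) through a one-dimensional extension operator; the desired bound then follows from the two-dimensional restriction theory for finite-type curves (essentially Fefferman–Zygmund plus a trivial integration in the flat direction), which is much stronger than $p>3.3$. One then glues the strip contribution, the slab contributions, and the genuinely two-dimensional bulk contribution together, again with an $\ell^p$ triangle inequality / decoupling over $O(\log R)$ pieces.

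The main obstacle, I expect, is making the interface between the anisotropic rescaling and the decoupling genuinely lossless: after rescaling a slab $[2^{-j},2^{-j+1}]\times[2^{-k},2^{-k+1}]$ with $j\ne k$ one obtains a surface over a long thin rectangle, and to apply the elliptic restriction estimate one must further partition into $O(2^{|j-k|})$ caps, and this subdivision must be absorbed by a decoupling inequality with the right (i.e.\ $p$-independent up to $R^\varepsilon$) constant — equivalently one needs the decoupling constant for $F^2_4$ to beat the number of caps for every $p>3.3$. Verifying that the relevant decoupling exponent is subcritical in this exact range, and checking that the accumulated powers of $K$ from the scaling identity in the introduction never exceed $R^\varepsilon$ once summed over all $\lesssim(\log R)^2$ slab pairs, is where the real work lies; the rest is standard induction-on-scales bookkeeping.
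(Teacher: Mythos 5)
Your overall skeleton (a decomposition adapted to the finite-type geometry, generalized anisotropic rescaling to uniformly elliptic pieces, a decoupling step, and a reduction of dimension near the axes) is the same as the paper's, but the two quantitative steps that actually produce the threshold $3.3$ are missing, and as described they would fail in the claimed range. First, the intermediate regime: for a block $[\lambda,2\lambda]\times[\sigma,2\sigma]$ with $R^{-1/4}\le\sigma\le K^{-1/4}$, decoupling into the $\approx\lambda^{2}\sigma^{-2}$ caps of size $\lambda^{-1}\sigma^{2}\times\sigma$ and applying rescaling plus Guth's elliptic estimate \cite{Guth} on each cap gives a per-cap bound $\lesssim\sigma^{3-7/p}\|g\|_{\infty}$, and the $\ell^{2}$ sum then yields $\lambda\,\sigma^{2-7/p}$; the exponent of $\sigma$ is \emph{negative} for every $p<3.5$, so summing over dyadic $\sigma$ down to $R^{-1/4}$ costs a positive power of $R$. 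Thus ``decoupling beats the number of caps'' is simply false for $3.3<p<3.5$, and your proposal offers no mechanism to repair this — you flag it as the main obstacle but do not resolve it. The paper's fix is an extra ingredient you never invoke: the bilinear-method estimate of Buschenhenke--M\"uller--Vargas \cite{BMV}, an $L^{p/(p-3)}\to L^{p}$ bound for $p>\tfrac{10}{3}$, which gains a positive power $\sigma^{1/10}$ from $|\Omega_{\lambda,\sigma}|\le\lambda\sigma$; interpolating the loss $\sigma^{-2/13}$ at $p=3.25$ with this gain at $p=\tfrac{10}{3}$ is exactly what yields $p>3.3$. Without it (or a substitute) your scheme closes only for $p\ge 3.5$, worse than the known $\tfrac{10}{3}$.

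Second, the degenerate strip of width $R^{-1/4}$ along an axis: reducing to a cylinder over a plane curve and then ``trivially integrating in the flat direction'' loses a factor $R^{1/p}$ from the long spatial direction while the strip width only buys $R^{-1/4}$ (for $L^{\infty}$-normalized data), so the net factor $R^{1/p-1/4}$ is admissible only for $p\ge 4$ — and the planar extension estimate you would feed in (Fefferman--Zygmund for the nondegenerate curve, or its finite-type analogue for $(t,t^{4})$) itself requires $p>4$ or more. For $3.3<p<4$ this step loses a power of $R$. The paper instead proves a cylindrical square-function estimate over $\lambda^{-1}R^{-1/2}\times R^{-1/4}\times R^{-1}$ slabs, combines it with a wave packet decomposition and a C\'ordoba-type tube-overlap (Kakeya) count to get an $L^{4}$ bound with the gain $R^{-3/16}$, and interpolates with a trivial $L^{2}$ bound, giving $R^{\frac{9}{4p}-\frac34+\varepsilon}\lesssim R^{\varepsilon}$ for all $p\ge 3$. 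A further structural difference: the paper does not decompose dyadically all the way down; it stops at scale $K^{-1/4}$ with $1\ll K\ll R^{\varepsilon}$ and treats the corner $[0,K^{-1/4}]^{2}$ by self-similarity, deriving a recurrence for $Q_{p}(R)$ (with factor $K^{\frac{3}{2p}-\frac12}<1$ for $p>3$) that is iterated — this is how the accumulation of losses over $\log R$ scales is avoided, whereas your all-dyadic decomposition places that summability burden exactly where the first gap bites.
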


\begin{remark}\label{rem:BMV}
    Buschenhenke-M\"{u}ller-Vargas in \cite{BMV} considered the $L^{r}$-$L^{p}$ restriction problem for a class
  of finite type surfaces and proved that \eqref{equ:goalm} holds true  for all $p>\frac{10}{3}$ by bilinear methods. Recently, Schwend and Stovall studied such problems in all dimensions in \cite{SS}. Their results also imply that \eqref{equ:goalm} holds  for all $p>\frac{10}{3}$ in $\R^3$.
  \end{remark}

{\bf Outline of the proof of Theorem \ref{thm:main}:}
 The surface $F^2_4$ is badly behaved when $\xi_1=0$ or $\xi_2=0$.  Roughly speaking, the strategy is to single out small neighborhoods of these two lines where we apply reduction of dimension arguments. More precisely, we first divide $[0,1]^2$ into $\bigcup\limits_{j=0}^3\Omega_j$ as Fig.1 below
 \begin{center}
 \begin{tikzpicture}[scale=0.6]

\draw[->] (-0.2,0) -- (5,0) node[anchor=north]   {$\xi_1$};
\draw[->] (0,-0.2) -- (0,5)  node[anchor=east] {$\xi_2$};

 \draw[thick] (0,4) -- (4,4)
 (4,0)--(4,4)
 (0,1)--(4,1)
 (1,0)--(1,4) ;

 \draw (-0.15,0) node[anchor=north] {O}
      (1.2,-0.08) node[anchor=north] {$K^{-\frac14}$}
     (4,-0.08) node[anchor=north] {$1$}
    (2.5,2.5) node[anchor=north] {$\Omega_0$}
     (2.5,0.8) node[anchor=north] {$\Omega_1$}
     (0.5,2.5) node[anchor=north] {$\Omega_2$}
      (0.5,0.8) node[anchor=north] {$\Omega_3$};

 \draw  (-0.15, 1) node[anchor=east] {$K^{-\frac14}$}
       (0,4) node[anchor=east] {$1$};

 \path (2,-1.5) node(caption){Fig. 1};  
 \end{tikzpicture}
\end{center}
 where $K$ is a large number independent of $R$  with  $1\ll K \ll R^\varepsilon$.
 Then, we have
 \begin{align}\label{equ:e01intr}
    \|\mathcal E_{[0,1]^2}g\|_{L^p(B_R)}\leq & \sum_{j=0}^{3} \|\mathcal E_{\Omega_j}g\|_{L^p(B_R)}.
 \end{align}
Denote $Q_p(R)$ to be  the least number such that
\begin{equation}\label{equ:defqpr1}
  \|\mathcal E_{[0,1]^2}g\|_{L^p(B_R)}\leq Q_p(R)\|g\|_{L^{\infty}(F_4^2)}.
\end{equation}

We first consider the contribution of $\Omega_0$-part. By Guth\cite{Guth}, we obtain
\begin{equation}\label{equ:guth325}
  \|\mathcal E_{\Omega_0}g\|_{L^p(B_R)}\leq C(K)C(\varepsilon)R^{\varepsilon}\|g\|_{L^{\infty}(F_4^2)},  \;\; \forall\; p>3.25,
\end{equation}
where $C(K)$ is a fixed positive power of $K$ and $C(\varepsilon)$ is the constant appeared in restriction estimate for perturbed paraboloid.
For the contribution of  $\Omega_3$-part, we get by rescaling and the definition of $Q_p(\cdot)$ in \eqref{equ:defqpr1}
\begin{equation}\label{equ:omeest1}
  \|\mathcal E_{\Omega_3}g\|_{L^p(B_R)}\leq CK^{\frac{3}{2p}-\frac{1}{2}}Q_p\Big(\tfrac{R}{K^{\frac14}}\Big)\|g\|_{L^{\infty}(F_4^2)}.
\end{equation}

 The most difficult part is to estimate the contribution of $\Omega_1$-part and $\Omega_2$-part.  By symmetry, it suffices to treat $\Omega_1$-part. For the subregion $[K^{-1/4},1]\times [0,R^{-1/4}]\subset \Omega_1$, we will introduce the wave packet decomposition and establish the related square function estimate, which are crucial in reduction of dimension arguments. Combining these with an argument based on 2D Kakeya-type estimate, we derive that
\begin{equation}\label{equ:omeg1partcon0}
   \|\mathcal E_{[K^{-1/4},1]\times [0,R^{-1/4}]}g\|_{L^p(B_R)}\leq C_{\varepsilon}R^{\varepsilon}\|g\|_{L^{\infty}(F_4^2)},\;\;p>3.
\end{equation}
For the subregion $\Omega_1\setminus [K^{-1/4},1]\times [0,R^{-1/4}]$, we will establish a certain decoupling inequality and use it to deduce that
\begin{equation}\label{equ:omeg1partcon1}
   \big\|\mathcal E_{\Omega_1\setminus [K^{-1/4},1]\times [0,R^{-1/4}]}g\big\|_{L^p(B_R)}\leq  C(K)C_{\varepsilon}R^{\varepsilon}\|g\|_{L^{\infty}(F_4^2)},\;p>3.3,
\end{equation}
where the $C(K)$ is a fixed positive power of $K$.
 It follows that
\begin{equation}\label{equ:omeg1partcon}
   \|\mathcal E_{\Omega_1}g\|_{L^p(B_R)}\leq  C(K)C_{\varepsilon}R^{\varepsilon}\|g\|_{L^{\infty}(F_4^2)},\;p>3.3.
\end{equation}
Collecting the contributions from different parts, we obtain for $p>3.3$
$$\|\mathcal E_{[0,1]^2}g\|_{L^p(B_R)}\leq \Big[C(K)C(\varepsilon)R^{\varepsilon}+2C(K)C_{\varepsilon}R^{\varepsilon}
+CK^{\frac{3}{2p}-\frac{1}{2}}Q_p\Big(\tfrac{R}{K^{\frac14}}\Big)\Big]\|g\|_{L^{\infty}(F_4^2)}.$$
This inequality implies the following recurrence inequality
$$Q_p(R)\leq C(K)C(\varepsilon)R^{\varepsilon}+2C(K)C_{\varepsilon}R^{\varepsilon}
+CK^{\frac{3}{2p}-\frac{1}{2}}Q_p\Big(\tfrac{R}{K^{\frac14}}\Big),$$
where the $C(K)$ is a fixed positive power of $K$.
Note that when $p>3.3$,  the exponent $\frac{3}{2p}-\frac{1}{2}$ is negative. Therefore, for appropriate choice of $K$ one may iterate this inequality to deduce $Q_p(R)\lesssim_{\varepsilon}R^{\varepsilon}$. This will help us to finish the proof of Theorem \ref{thm:main}.


\vskip 0.2in

The paper is organized as follows.  In Section 2, we will give some preliminaries on the problem and introduce a quite powerful decomposition associated with finite type surface which plays an important role in the proof of Theorem \ref{thm:main}. In Section 3, we give the proof of Theorem \ref{thm:main} by reduction of dimension arguments including a decoupling inequality, a square function estimate and a Kakeya-type estimate.

\vskip 0.2in

{\bf Notations:} For nonnegative quantities $X$ and $Y$, we will write $X\lesssim Y$ to denote the estimate $X\leq C Y$ for some
large constant C which may vary from line to line and depend on various parameters,
and similarly we use $X\ll Y$ to denote $X\leq C^{-1}Y$. If $X\lesssim Y\lesssim X$, we simply write $X\sim Y$. Dependence of implicit constants on the power $p$ or the dimension will be suppressed; dependence on additional parameters will be indicated by subscripts. For example, $X\lesssim_u Y$ indicates $X\leq CY$ for some $C=C(u)$. Usually,  Fourier transform on $\mathbb{R}^n$
is defined by
\begin{equation*}
\aligned \widehat{f}(\xi):= \int_{\mathbb{R}^n}e^{- 2\pi ix\cdot \xi}f(x)\,dx.
\endaligned
\end{equation*}




\section{Preliminaries and useful decomposition}

To analyse our extension operator $\mathcal E$, we need to partition the surface $F^2_4$ into small pieces in an appropriate manner. For this purpose, we present a few concepts and relative calculations on the differential geometry of the surface.  This helps us to introduce a decomposition adapted to finite type surface in Subsection 2.2, which plays a central role in the proof of Theorem \ref{thm:main}.

\subsection{Gaussian curvature of the surface $F^2_4$}\label{subsec:difgeo}

 In this subsection, we recall some useful preliminaries on geometry of the surface $\gamma(u,v)=\{(u,v,h(u,v)):(u,v)\in [0,1]^2\}$ in $\mathbb{R}^3$, where $h(u,v)=u^4+v^4$. It is easy to see that this surface has some degenerate features by calculating its Gaussian curvature, and one can refer to standard textbooks such as \cite{Do}. We calculate the tangent vectors and the unit norm vector at $(u,v)$ as follows
\begin{align*}
\gamma_u=&(1,0,4u^3),\\
\gamma_v=&(0,1,4v^3),\\
n=&\frac{\gamma_u \times \gamma_v}{\vert \gamma_u \times \gamma_v \vert}.
\end{align*}
Also,  a direct calculation yields
\begin{align*}
  \gamma_{uu}=&(0,0,12u^2),\\
  \gamma_{uv}=&(0,0,0),\\
  \gamma_{vv}=&(0,0,12v^2).
\end{align*}
Therefore, the first fundamental form of the surface $\gamma(u,v)$ is given by
\begin{equation*}
  {\rm I}(du,dv)= Edu^2+2Fdudv+Gdv^2,
\end{equation*}
where
\begin{align*}
  E=&\langle \gamma_u,\gamma_u \rangle= 1+16u^6,\\
  F=&\langle \gamma_u,\gamma_v \rangle=16u^3v^3,\\
  G=&\langle \gamma_v,\gamma_v \rangle=1+16v^6.
\end{align*}
And the second fundamental form is given by
\begin{equation*}
 {\rm II}(du,dv)= Ldu^2+2Mdudv+Ndv^2,
\end{equation*}
where
\begin{align*}
L=&\langle \gamma_{uu},n \rangle=\frac{12u^2}{\sqrt{1+16u^6+16v^6}},\\
M=&\langle \gamma_{uv},n \rangle=0,\\
N=&\langle \gamma_{vv},n \rangle=\frac{12v^2}{\sqrt{1+16u^6+16v^6}}.
\end{align*}
Hence, the Gaussian curvature of the surface $\gamma(u,v)$ is
\begin{equation}\label{equ:gausscur}
  k=\frac{LN-M^2}{EG-F^2}=\frac{144u^2v^2}{(1+16u^6+16v^6)^2}.
\end{equation}

\begin{remark}\label{rem:geocal}
From the above calculation, we see that the Gaussian curvature of $\gamma(u,v)$ vanishes when $u=0$ or $v=0$.
We observe that the surface has positive definite second fundamental form if both $u$ and $v$ are away from zero.
In this region, one can adopt the known restriction estimates for perturbed paraboloid, see Guth \cite{Guth}. So it reduces to the case $0 < u \ll 1$ or $0 < v \ll 1$. In other words, we only need to consider the surface in a small neighborhood of the curve $(0,v,v^4)$ or $(u,0,u^4)$. We will apply reduction of dimension arguments to these small neighborhoods.
\end{remark}

\subsection{A decomposition adapted to finite type surface}

Now we introduce a useful decomposition adapted to the surface $F^2_4$, which enables us to do the so-called generalized rescaling successfully in process of induction on scales.
To begin the discussion, we first recall the prototypical case, the standard parabolic rescaling,  for the surface $F^2_2$ as follows.
Let $K$ be a large number satisfying $1\ll K \ll R^{\varepsilon}$ for any fixed $\varepsilon >0$. Suppose that $\lambda$ and $\sigma$ are two given numbers between $0$ and $1$. For each subset $Q\subset [0,1]^{2}$ and $\phi(\xi_1,\xi_2)=\xi^2_1+\xi^2_2$, we denote the Fourier extension operator associated with the paraboloid by
\begin{equation*}
 {\mathcal  E}^{\rm Par}_{Q}g(x):= \int_Q g(\xi_1,\xi_2)e[x_1\xi_1+x_2\xi_2+x_3\phi(\xi_1,\xi_2)]\;d\xi_1\;d\xi_2.
\end{equation*}
For the region $\tau:= [\lambda,\lambda+K^{-\frac12}]\times[\sigma,\sigma+K^{-\frac12}]$, we take change of variables
\begin{equation*}
  \left\{\begin{aligned}
    &\xi_1=\lambda+\tfrac{\eta_1}{K^{\frac12}},\\
    &\xi_2=\sigma+\tfrac{\eta_2}{K^{\frac12}}
  \end{aligned}\right.
\end{equation*}
to deduce
\begin{equation*}
  \vert \mathcal{E}^{\rm Par}_{\tau}g(x)\vert=\Big| \int_{[0,1]^2}\tilde{g}(\eta_1,\eta_2)e[\tilde{x}_1\eta_1+\tilde{x}_2\eta_2+\tilde{x}_3\phi(\eta_1,\eta_2)]
  \;d\eta_1\;d\eta_2\Big|\\
  =\vert \mathcal{E}^{\rm Par}_{[0,1]^2}\tilde{g}(\tilde{x})\vert.
\end{equation*}
Here
\begin{align*}
&\tilde{x}=\Big(K^{-\frac12}(x_1+2\lambda x_3),K^{-\frac12}(x_2+2\sigma x_3),K^{-1}x_3\Big),\\
&\tilde{g}(\eta_1,\eta_2)=K^{-1}g\Big(\lambda+\tfrac{\eta_1}{K^{\frac12}},\sigma+\tfrac{\eta_2}{K^{\frac12}}\Big).\\
\end{align*}
See Fig.2 for the  process of  parabolic rescaling.

\begin{center}
 \begin{tikzpicture}[scale=0.7]

\draw[->] (-0.2,0) -- (4,0) node[anchor=north] {$\xi_1$};
\draw[->] (0,-0.2) -- (0,4)  node[anchor=east] {$\xi_2$};

\draw (-0.15,0) node[anchor=north] {O}
(3.2,0) node[anchor=north] {$1$}
(1,0) node[anchor=north] {$\lambda$}
(2.1,0) node[anchor=north] {\small $\lambda+{K^{-\frac12}}$};

\draw (0,3.2) node[anchor=east] {1}
      (0,1) node[anchor=east] {$\sigma$}
       (0,1.75) node[anchor=east] {\small $\sigma+{K^{-\frac12}}$};

\draw[thick] (0,3.2) -- (3.2,3.2)
             (3.2,0) -- (3.2,3.2)
             (1,1) -- (1,1.75)
             (1,1) -- (1.75,1)
             (1,1.75) -- (1.75,1.75)
             (1.75,1) -- (1.75,1.75);

\draw[dashed,thick] (1,0) -- (1,1) 
                     (1.75,0) -- (1.75,1)
                     (0,1) -- (1,1)
                     (0,1.75) -- (1,1.75);

\draw { (1,1) -- (1,1.75) -- (1.75,1.75) -- (1.75,1)  -- (1,1) } [fill=gray!90]; 

\path (1.5,-1.3) node(caption){$\phi(\xi)=\xi_1^2+\xi_2^2$ };  

\draw[->] (3.6,2) -- (5.6,2) node[anchor=north]{};
\draw (6.1,2.3) node[anchor=east] {translation};


 \draw[->] (5.8,0) -- (10,0) node[anchor=north] {$\tilde{\xi}_1$};
\draw[->] (6,-0.2) -- (6,4)  node[anchor=east] {$\tilde{\xi}_2$};
 \draw (5.65,0) node[anchor=north] {O}
 (9.2,0) node[anchor=north] {1}
 (7.1,0) node[anchor=north] {\small $K^{-\frac12}$};

 \draw (6,3.2) node[anchor=east] {1}
      (6,0.75) node[anchor=east] {${K^{-\frac12}}$};

 \draw[thick] (6,0.75) -- (6.75,0.75)
             (6.75,0) -- (6.75,0.75)
              (9.2,0) -- (9.2,3.2)
             (6,3.2) -- (9.2,3.2);

 \draw { (6,0) -- (6.75,0) -- (6.75,0.75) -- (6,0.75)  -- (6,0) } [fill=gray!90]; 

\path (7.5,-1.3) node(caption){$\phi(\tilde{\xi})=\tilde{\xi}_1^2+\tilde{\xi}_2^2$ };  

\draw[->] (9.6,2) -- (11.6,2) node[anchor=north]{};
\draw (11.5,2.3) node[anchor=east] {scaling};


 \draw[->] (11.8,0) -- (16,0) node[anchor=north] {$\eta_1$};
\draw[->] (12,-0.2) -- (12,4)  node[anchor=east] {$\eta_2$};

  \draw (11.65,0) node[anchor=north] {O}
 (15.2,0) node[anchor=north] {1};

 \draw (12,3.2) node[anchor=east] {1};

  \draw[thick] (12,3.2) -- (15.2,3.2)
             (15.2,0) -- (15.2,3.2);

 \draw { (12,0) -- (15.2,0) -- (15.2,3.2) -- (12,3.2)  -- (12,0) } [fill=gray!90]; 

\path (13.5,-1.3) node(caption){$\phi(\eta)=\eta_1^2+\eta_2^2$ };  

\path (7.5,-2.5) node(caption){Fig. 2};
\end{tikzpicture}

 \end{center}

We notice that the standard parabolic rescaling is  inefficient for our purpose.
 We introduce the generalized rescaling which  matches  with  the surface $F^2_4$.
 $K\gg1$ is  independent of $R$ such that  $1\ll K \ll R^\varepsilon$.   For each  subset $\tau\subset [0,1]^{2}$ and phase function $\psi(\xi)=\xi^4_1+\xi^4_2$, we denote the Fourier extension operator by
\begin{equation}\label{eq-add-20}
  {\mathcal E}_{\tau}g(x):= \int_{\tau} g(\xi_1,\xi_2)e[x_1\xi_1+x_2\xi_2+x_3\psi(\xi_1,\xi_2)]\;d\xi_1\;d\xi_2.
\end{equation}
In view of the degenerate feature of the surface $F^2_4$, it suffices to consider the following three different cases:
\begin{enumerate}
  \item[(a)] $\tau=[0,K^{-\frac14}]\times[0,K^{-\frac14}]$;
  \vskip0.15cm
  \item[(b)] $\tau =[\lambda,\lambda+\lambda^{-1}K^{-\frac12}]\times[0,K^{-\frac14}]$;
  \vskip0.15cm
  \item[(c)]  $\tau = [\lambda,\lambda+\lambda^{-1}K^{-\frac12}]\times[\sigma,\sigma+\sigma^{-1}K^{-\frac12}]$.
\end{enumerate}
In case(a), the two principle curvatures of the surface $F^2_4$ are both degenerate. In case(b), only one of the principle curvatures is degenerate. In the last case, the surface $F^2_4$ has two positive principle curvatures.
\vskip0.15cm

{\bf (a). $\tau=[0,K^{-\frac14}]\times[0,K^{-\frac14}]$}. Under the change of variables
\begin{equation*}
  \left\{\begin{aligned}
    &\xi_1=K^{-\frac14}\eta_1,\\
    &\xi_2=K^{-\frac14}\eta_2
  \end{aligned}\right.
\end{equation*}
 we can rewrite \eqref{eq-add-20} as
\begin{equation*}
  \vert \mathcal E_{\tau}g(x)\vert=\Big| \int_{[0,1]^2}\tilde{g}(\eta_1,\eta_2)e[\tilde{x}_1\eta_1+\tilde{x}_2\eta_2+\tilde{x}_3\psi(\eta_1,\eta_2)]
  \;d\eta_1\;d\eta_2\Big|\\
  =\vert \mathcal E_{[0,1]^2}\tilde{g}(\tilde{x})\vert.,
\end{equation*}
where
\begin{equation*}
  \left\{\begin{aligned}
&\tilde{x}=\big(K^{-\frac14}x_1,K^{-\frac14}x_2,K^{-1}x_3\big),\\
&\tilde{g}(\eta_1,\eta_2)=K^{-\frac12}g\big(K^{-\frac14}\eta_1,K^{-\frac14}\eta_2\big).
\end{aligned}\right.\end{equation*}
See Fig.3 for the  scaling transformation.

\vskip0.3cm
\begin{center}
 \begin{tikzpicture}[scale=0.7]

\draw[->] (-0.2,0) -- (4,0) node[anchor=north] {$\xi_1$};
\draw[->] (0,-0.2) -- (0,4)  node[anchor=east] {$\xi_2$};

\draw (-0.15,0) node[anchor=north] {O}
(3.2,0) node[anchor=north] {$1$}
(1.2,0) node[anchor=north] {\small ${K^{-\frac14}}$};

\draw (0,3.2) node[anchor=east] {1}
       (0,1) node[anchor=east] {\small ${K^{-\frac14}}$};

\draw[thick] (0,3.2) -- (3.2,3.2)
             (3.2,0) -- (3.2,3.2)
             (1,0) -- (1,1)
             (0,1) -- (1,1);

\draw { (0,0) -- (1,0) -- (1,1) -- (0,1)  -- (0,0) } [fill=gray!90]; 

\path (1.5,-1.3) node(caption){$\psi(\xi)=\xi_1^4+\xi_2^4$ };  

\draw[->] (4.6,2) -- (6.6,2) node[anchor=north]{};
\draw (6.5,2.3) node[anchor=east] {scaling};


 \draw[->] (7.8,0) -- (12,0) node[anchor=north] {$\eta_1$};
\draw[->] (8,-0.2) -- (8,4)  node[anchor=east] {$\eta_2$};

  \draw (7.65,0) node[anchor=north] {O}
 (11.2,0) node[anchor=north] {1};

 \draw (8,3.2) node[anchor=east] {1};

  \draw[thick] (8,3.2) -- (11.2,3.2)
             (11.2,0) -- (11.2,3.2);

 \draw { (8,0) -- (11.2,0) -- (11.2,3.2) -- (8,3.2)  -- (8,0) } [fill=gray!90]; 

\path (9.5,-1.3) node(caption){$\psi(\eta)=\eta_1^4+\eta_2^4$ };  

\path (6.3,-2.5) node(caption){Fig. 3};
\end{tikzpicture}

 \end{center}

{\bf (b). $\tau =[\lambda,\lambda+\lambda^{-1}K^{-\frac12}]\times[0,K^{-\frac14}]$}, where $\lambda \in [K^{-\frac14},\frac{1}{2}]$ is dyadic.
we take change of variables
\begin{equation*}
  \left\{\begin{aligned}
    &\xi_1=\lambda+\frac{\eta_1}{\lambda K^{\frac12}},\\
    &\xi_2=\frac{\eta_2}{K^{\frac14}},
  \end{aligned}\right.
\end{equation*}
in \eqref{eq-add-20} and obtain
\begin{equation*}
  \vert \mathcal E_{\tau}g(x)\vert=\Big| \int_{[0,1]^2}\tilde{g}(\eta_1,\eta_2)e[\tilde{x}_1\eta_1+\tilde{x}_2\eta_2+\tilde{x}_3\psi_1(\eta_1,\eta_2)]
  \;d\eta_1\;d\eta_2\Big|\\
  =\vert \tilde{\mathcal{E}}_{[0,1]^2}\tilde{g}(\tilde{x})\vert,
\end{equation*}
where
\begin{equation*}\left\{\begin{aligned}
&\tilde{x}=\big(\lambda^{-1}K^{-\frac12}x_1+4\lambda^2K^{-\frac12}x_3,K^{-\frac14}x_2,K^{-1}x_3\big),\\
&\tilde{g}(\eta_1,\eta_2)=\lambda^{-1}K^{-3/4}g\big(\lambda+\frac{\eta_1}{\lambda K^{\frac12}},\frac{\eta_2}{K^{\frac14}}\big),\\
&\psi_1(\eta_1,\eta_2)
=(6\eta_1^2+4\lambda^{-2}K^{-\frac12}\eta_1^3+\lambda^{-4}K^{-1}\eta_1^4)
+\eta_2^4.
\end{aligned}\right.\end{equation*}
Here $\tilde{\mathcal E}_{[0,1]^2}$ denotes Fourier extension operators associated with phase functions of the form $\psi_1(\eta_1,\eta_2)=\phi_1(\eta_1)+\eta_2^4$ satisfying $\partial^2_1\phi_1 \sim 1, \vert \partial^3_1\phi_1 \vert \lesssim 1, \vert \partial^4_1\phi_1 \vert \lesssim 1$ on $[0,1]$.
 The  process of generalized rescaling  can be found in  Fig.4:

\vskip0.3cm

\begin{center}
 \begin{tikzpicture}[scale=0.7]
\draw[->] (-0.2,0) -- (4,0) node[anchor=north] {$\xi_1$};
\draw[->] (0,-0.2) -- (0,4)  node[anchor=east] {$\xi_2$};

\draw (-0.15,0) node[anchor=north] {O}
(3.2,0) node[anchor=north] {$1$}
(1,0) node[anchor=north] {$\lambda$,}
(2.2,0) node[anchor=north] {\small $\lambda+\tfrac{1}{\lambda K^{\frac12}}$};

\draw (0,3.2) node[anchor=east] {1}
       (0,1) node[anchor=east] {$K^{-\frac14}$};

\draw[thick] (0,3.2) -- (3.2,3.2)
             (3.2,0) -- (3.2,3.2)
             (1,0) -- (1,1)
             (1,1) -- (1.75,1)
             (1.75,0) -- (1.75,1);

\draw[dashed,thick] (0,1) -- (1,1); 

 \draw { (1,0) -- (1.75,0) -- (1.75,1) -- (1,1)  -- (1,0) } [fill=gray!90]; 

\path (1.5,-1.35) node(caption){$\psi(\xi)$ };  

\draw[->] (3.6,2) -- (5.6,2) node[anchor=north]{};
\draw (6.1,2.3) node[anchor=east] {translation};


 \draw[->] (5.8,0) -- (10,0) node[anchor=north] {$\tilde{\xi}_1$};
\draw[->] (6,-0.2) -- (6,4)  node[anchor=east] {$\tilde{\xi}_2$};
 \draw (5.65,0) node[anchor=north] {O}
 (9.2,0) node[anchor=north] {1}
 (7.1,0) node[anchor=north] {\small $\tfrac{1}{\lambda K^{\frac12}}$};

 \draw (6,3.2) node[anchor=east] {1}
      (6,1) node[anchor=east] {${K^{-\frac14}}$};

 \draw[thick] (6,1) -- (6.75,1)
             (6.75,0) -- (6.75,1)
              (9.2,0) -- (9.2,3.2)
             (6,3.2) -- (9.2,3.2);

 \draw { (6,0) -- (6.75,0) -- (6.75,1) -- (6,1)  -- (6,0) } [fill=gray!90]; 

\path (7.6,-1.35) node(caption){$\tilde{\psi}_1(\tilde{\xi})$};  

\draw[->] (9.6,2) -- (11.6,2) node[anchor=north]{};
\draw (11.5,2.3) node[anchor=east] {scaling};


 \draw[->] (11.8,0) -- (16,0) node[anchor=north] {$\eta_1$};
\draw[->] (12,-0.2) -- (12,4)  node[anchor=east] {$\eta_2$};

  \draw (11.65,0) node[anchor=north] {O}
 (15.2,0) node[anchor=north] {1};

 \draw (12,3.2) node[anchor=east] {1};

  \draw[thick] (12,3.2) -- (15.2,3.2)
             (15.2,0) -- (15.2,3.2);

 \draw { (12,0) -- (15.2,0) -- (15.2,3.2) -- (12,3.2)  -- (12,0) } [fill=gray!90]; 

\path (14.0,-1.35) node(caption){$\psi_1(\eta)$ };  

\path (7.5,-2.5) node(caption){Fig. 4};
\end{tikzpicture}
 \end{center}

where
\begin{equation*}\left\{\begin{aligned}
&\psi(\xi)=\xi_1^4+\xi_2^4,\\
&\tilde{\psi}_1(\tilde{\xi})=(6\tilde{\xi}_1^2+4\lambda^{-2}\tilde{\xi}_2^3
+\lambda^{-4}\tilde{\xi}_1^4)+\tilde{\xi}_2^4\\
&{\psi}_1(\eta)=(6\eta_1^2+4\lambda^{-2}K^{-\frac12}\eta_2^3+\lambda^{-4}K^{-1}\eta_2^4)+\eta_2^4,\\
\end{aligned}\right.\end{equation*}
\vskip 0.1in

{\bf (c)}. $\tau = [\lambda,\lambda+\lambda^{-1}K^{-\frac12}]\times[\sigma,\sigma+\sigma^{-1}K^{-\frac12}]$, where $ \lambda,\sigma \in [K^{-\frac14}, \frac{1}{2}]$ are both dyadic numbers. From the discussion in Subsection \ref{subsec:difgeo}, we know that the Gaussian curvature of $\gamma(u,v)$ is roughly constant in the region $$\Omega_{\lambda,\sigma}=\Big\{(u,v):\; \lambda \leq u \leq 2\lambda,\;\;\sigma < v < 2\sigma\Big\}.$$
Therefore, it is reasonable to divide $\Omega_{\lambda,\sigma}$ into $\lambda^2 K^{\frac12}\times\sigma^2 K^{\frac12}$ pieces equally
$$\Omega_{\lambda,\sigma}=\bigcup_{1\le j\le\lambda^2K^{\frac12}\atop 1\le m\le \sigma^2K^{\frac12}}
\big[\lambda+(j-1)\lambda^{-1}K^{-\frac12}, \lambda+j\lambda^{-1}K^{-\frac12} \big]\times\big[\sigma+(m-1)\sigma^{-1}K^{-\frac12}, \sigma+m\sigma^{-1}K^{-\frac12}\big].$$
with dimension $\lambda^{-1}K^{-\frac12}\times\sigma^{-1}K^{-\frac12}$.
 Without loss of generality, we can implement generalized rescaling to each piece in the union such as $\tau$.
   We take the change of variables
\begin{equation*}\left\{
  \begin{aligned}
    &\xi_1=\lambda+\frac{\eta_1}{\lambda K^{\frac12}}\\
    &\xi_2=\sigma+\frac{\eta_2}{\sigma K^{\frac12}},
  \end{aligned}\right.
\end{equation*}
to get
\begin{equation*}
  \vert \mathcal E_{\tau}g(x)\vert=\Big| \int_{[0,1]^2}\tilde{g}(\eta_1,\eta_2)e[\tilde{x}_1\eta_1+\tilde{x}_2\eta_2+\tilde{x}_3\psi_0(\eta_1,\eta_2)]
  \;d\eta_1\;d\eta_2\Big|=:\vert {\mathcal E}^{\rm Parp}_{[0,1]^2}\tilde{g}(\tilde{x})\vert.
\end{equation*}
where
\begin{equation*}\left\{
  \begin{aligned}
&\tilde{x}=\big(\lambda^{-1}K^{-\frac12}x_1+4\lambda^2K^{-\frac12}x_3,\sigma^{-1}K^{-\frac12}x_2+4\sigma^2K^{-\frac12}x_3,K^{-1}x_3\big),\\
&\tilde{g}(\eta_1,\eta_2)=\lambda^{-1}\sigma^{-1}K^{-1}g\Big(\lambda+\frac{\eta_1}{\lambda K^{\frac12}},\sigma+\frac{\eta_2}{\sigma K^{\frac12}}\Big),\\
&\psi_0(\eta_1,\eta_2)=(6\eta_1^2+4\lambda^{-2}K^{-\frac12}\eta_1^3+\lambda^{-4}K^{-1}\eta_1^4)
+(6\eta_2^2+4\sigma^{-2}K^{-\frac12}\eta_2^3+\sigma^{-4}K^{-1}\eta_2^4).
\end{aligned}\right.
\end{equation*}
Here $\mathcal{E}^{\rm Parp}_{[0,1]^2}$ denotes Fourier extension operators associated with the class of surfaces with positive definite second fundamental form. Note that $\lambda,\sigma \geq K^{-\frac14}$ and $0\leq \xi_i \leq 1(i=1,2)$,  we conclude that the phase functions are of the form $\psi_0(\eta_1,\eta_2)= \phi_1(\eta_1)+\phi_2(\eta_2)$ satisfying
$$\partial^2_i\phi_i(\eta_i)\sim 1, \;\vert \partial^3_i\phi_i(\eta_i)\vert \lesssim 1,\; \vert \partial^4_i\phi_i(\eta_i)\vert \lesssim 1, \;\;\;\text{on} \;[0,1],\; i=1,2.$$

\vskip0.25cm
\begin{center}
 \begin{tikzpicture}[scale=0.7]
\draw[->] (-0.2,0) -- (4,0) node[anchor=north] {$\xi_1$};
\draw[->] (0,-0.2) -- (0,4)  node[anchor=east] {$\xi_2$};

\draw (-0.15,0) node[anchor=north] {O}
(3.2,0) node[anchor=north] {$1$}
(1,0) node[anchor=north] {$\lambda$,}
(2.1,0) node[anchor=north] {\small $\lambda+\tfrac{1}{\lambda K^{\frac12}}$};

\draw (0,3.2) node[anchor=east] {1}
      (0,1) node[anchor=east] {$\sigma$}
       (0,1.75) node[anchor=east] {\small $\sigma+\frac{1}{\sigma K^{\frac12}}$};

\draw[thick] (0,3.2) -- (3.2,3.2)
             (3.2,0) -- (3.2,3.2)
             (1,1) -- (1,1.75)
             (1,1) -- (1.75,1)
             (1,1.75) -- (1.75,1.75)
             (1.75,1) -- (1.75,1.75);

\draw[dashed,thick] (1,0) -- (1,1) 
                     (1.75,0) -- (1.75,1)
                     (0,1) -- (1,1)
                     (0,1.75) -- (1,1.75);

\draw { (1,1) -- (1,1.75) -- (1.75,1.75) -- (1.75,1)  -- (1,1) } [fill=gray!90]; 

\path (1.5,-1.35) node(caption){$\psi(\xi)$ };  

\draw[->] (3.6,2) -- (5.6,2) node[anchor=north]{};
\draw (6.1,2.3) node[anchor=east] {translation};


 \draw[->] (5.8,0) -- (10,0) node[anchor=north] {$\tilde{\xi}_1$};
\draw[->] (6,-0.2) -- (6,4)  node[anchor=east] {$\tilde{\xi}_2$};
 \draw (5.65,0) node[anchor=north] {O}
 (9.2,0) node[anchor=north] {1}
 (7.1,0) node[anchor=north] {\small $\tfrac{1}{\lambda K^{\frac12}}$};

 \draw (6,3.2) node[anchor=east] {1}
      (6,0.75) node[anchor=east] {\small $\tfrac{1}{\sigma K^{\frac12}}$};

 \draw[thick] (6,0.75) -- (6.75,0.75)
             (6.75,0) -- (6.75,0.75)
              (9.2,0) -- (9.2,3.2)
             (6,3.2) -- (9.2,3.2);

 \draw { (6,0) -- (6.75,0) -- (6.75,0.75) -- (6,0.75)  -- (6,0) } [fill=gray!90]; 

\path (7.5,-1.35) node(caption){$\tilde{\psi}_0(\tilde{\xi})$ };  

\draw[->] (9.6,2) -- (11.6,2) node[anchor=north]{};
\draw (11.5,2.3) node[anchor=east] {scaling};


 \draw[->] (11.8,0) -- (16,0) node[anchor=north] {$\eta_1$};
\draw[->] (12,-0.2) -- (12,4)  node[anchor=east] {$\eta_2$};

  \draw (11.65,0) node[anchor=north] {O}
 (15.2,0) node[anchor=north] {1};

 \draw (12,3.2) node[anchor=east] {1};

  \draw[thick] (12,3.2) -- (15.2,3.2)
             (15.2,0) -- (15.2,3.2);

 \draw { (12,0) -- (15.2,0) -- (15.2,3.2) -- (12,3.2)  -- (12,0) } [fill=gray!90]; 

\path (13.5,-1.35) node(caption){${\psi}_0(\eta)$ };  

\path (7.5,-2.5) node(caption){Fig. 5};
\end{tikzpicture}
 \end{center}
\begin{equation*}\left\{
  \begin{aligned}
&\psi(\xi)=\xi_1^4+\xi_2^4,\\
&\tilde{\psi}_0(\tilde{\xi})=(6\tilde{\xi}_1^2+4\lambda^{-2}\tilde{\xi}_1^3+\lambda^{-4}\tilde{\xi}_1^4)+(6\tilde{\xi}_2^2
+4\lambda^{-2}\tilde{\xi}_2^3+\lambda^{-4}\tilde{\xi}_2^4)\\
&\psi_0(\eta)=(6\eta_1^2+4\lambda^{-2}K^{-\frac12}\eta_1^3+\lambda^{-4}K^{-1}\eta_1^4)+
(6\eta_2^2+4\lambda^{-2}K^{-\frac12}\eta_2^3+\lambda^{-4}K^{-1}\eta_2^4).
\end{aligned}\right.
\end{equation*}

\vskip 0.1in

From the above discussion, we can construct decomposition of  $[0,1]^{2}$ associated with the phase
function $\psi(\xi)=\xi_1^4+\xi_2^4$.
At first, we divide $[0,1]$ into
\begin{equation*}
  [0,1]=\bigcup I_j ,
\end{equation*}
where $I_0=[0,K^{-\frac14}]$ and
$$I_j=[2^{j-1}K^{-\frac14},2^{j}K^{-\frac14}], \;\;\text{for}\;\; 1\leq j \leq \big[\tfrac{1}{4}\log_{2}K\big].$$

Secondly, for $j\geq 1$, we divide  each $I_j$ further into
\[I_j=\bigcup_{\mu=1}^{2^{2(j-1)}} I_{j,\mu},\]
where
$$I_{j,\mu}=[2^{j-1}K^{-\frac14}+(\mu-1)2^{-(j-1)}K^{-\frac14},2^{j-1}K^{-\frac14}+ \mu 2^{-(j-1)}K^{-\frac14}].$$
Thus we have the following  decomposition
\[ [0,1]^{2}=\bigcup \tau, \;\;\; \tau = I_{j_1,\mu_1}\times I_{j_2,\mu_2}.\]
We call it  a $K-$regular decomposition, denoted by $\mathcal{F}_3(K,4)$,  adapted to finite type surface of order $4$
in $\mathbb{R}^3$.
 Analogous decomposition appeared in the  restriction estimates for certain conical surfaces of finite type in $\mathbb{R}^3$ by Buschenhenke in \cite{Bus}.

\begin{remark}
From the above discussion, we see that the rescaling technique fails if one decomposes the region $[0,1]^2$ into pieces equally. Thus, it is necessary to employ the decomposition in this paper
 to tackle with harmonic analysis problems associated with surfaces of finite type.
\end{remark}

\begin{remark}\label{equ:rem:scale}
Since the estimate we considered are localized on $B_R$ in the physical space, by the uncertainty principle, the local constant property holds at scale $R^{-1}$ on the Fourier side. For each $\lambda^{-1}R^{-\frac12}\times\sigma^{-1}R^{-\frac12}$-region $\theta$,  we denote $\bar\theta$ by
$$\bar\theta=\Big\{(\xi_1,\xi_2,\xi_3):\;\; (\xi_1,\xi_2)\in \theta, \;\big|\xi_3-(\xi_1^4+\xi_2^4)\big|< R^{-1}\Big\}$$
For each $\theta\in \mathcal F_3(R,4)$, we can regard the associated $\bar{\theta}$ as a rectangular box which is called a slab. For each slab $\bar{\theta}$,  $\mathbb T(\bar\theta)$ denotes a finitely-overlapping
collection of $\sim \lambda R^{\frac12}\times \sigma R^{\frac12}\times R$ rectangles which cover $\R^3$ and are orientated in the direction of $\bar\theta$. The decomposition $\mathcal F_3(R,4)$ ensures that the tubes $T\in \mathbb{T}(\bar{\theta})$ have a common length $R$ in the long side direction,  which is indispensable for establishing wave packet decomposition in Subsection \ref{subs:wavepack} and the Kakeya-type estimate in Subsection \ref{subs:square} below.
\end{remark}




\section{Proof of Theorem \ref{thm:main}}

From now on,  we  denote
\begin{equation*}
  \Sigma:=\big\{(\xi_1,\xi_2,\psi(\xi)): (\xi_1,\xi_2)\in [0,1]^2\big\},\;\;~\psi(\xi)=\xi_1^4+\xi_2^4
\end{equation*}
in $\mathbb{R}^3$. The corresponding Fourier extension operator becomes
\[\mathcal Eg(x):=\int_{[0,1]^2}g(\xi)e(x_1\xi_1+x_2\xi_2+x_3\psi(\xi))\;d\xi_1\;d\xi_2,\]
where $e(t):=e^{2\pi i t}$ for $t\in \mathbb{R}^1$.
\vskip0.12cm

 Unwinding the definition of $\Sigma$ and $\mathcal Eg$, Theorem \ref{thm:main} can be rewritten as
\begin{theorem}
  For any given $\varepsilon >0$ and any radius $R\geq 1$, the inequality
\begin{equation}\label{equ:redgoal}
  \|\mathcal Eg\|_{L^p(B_R)}\leq C_{\varepsilon}R^{\varepsilon}\|g\|_{L^{\infty}(\Sigma)}
\end{equation}
holds for all $p>3.3$.
\end{theorem}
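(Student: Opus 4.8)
The plan is to run an induction on scales, exactly as outlined in the introduction: I would set up the quantity $Q_p(R)$ defined by \eqref{equ:defqpr1}, decompose $[0,1]^2 = \bigcup_{j=0}^{3}\Omega_j$ as in Fig.~1, and estimate each piece separately, aiming at a recurrence of the form
\begin{equation*}
  Q_p(R)\leq C(K)C_\varepsilon R^\varepsilon + C K^{\frac{3}{2p}-\frac12} Q_p\bigl(\tfrac{R}{K^{1/4}}\bigr),
\end{equation*}
which closes since $\frac{3}{2p}-\frac12<0$ for $p>3.3$, upon choosing $K$ large and iterating $\sim \log R/\log K$ times. The $\Omega_0$-contribution, where both $\xi_1,\xi_2\gtrsim K^{-1/4}$, is handled by the change of variables of case (c) in Subsection 2.2, which turns $\mathcal E_{\Omega_0}$ into extension operators $\mathcal E^{\mathrm{Parp}}$ associated with surfaces of positive-definite second fundamental form with uniformly controlled derivatives; Guth's restriction theorem \cite{Guth} then gives \eqref{equ:guth325} for $p>3.25$, the rescaling costing only a fixed power $C(K)$. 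The $\Omega_3 = [0,K^{-1/4}]^2$ contribution is case (a): the parabolic-type scaling $\xi = K^{-1/4}\eta$ maps $\mathcal E_{\Omega_3}g$ to $\mathcal E_{[0,1]^2}\tilde g$ on a ball of radius $R/K^{1/4}$, yielding \eqref{equ:omeest1} with the gain factor $K^{3/(2p)-1/2}$ that drives the iteration.

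The core of the argument is $\Omega_1$ (and by symmetry $\Omega_2$), the strip $[K^{-1/4},1]\times[0,K^{-1/4}]$ where exactly one principal curvature degenerates. I would split it as $\Omega_1 = \bigl([K^{-1/4},1]\times[0,R^{-1/4}]\bigr)\cup\bigl(\Omega_1\setminus[K^{-1/4},1]\times[0,R^{-1/4}]\bigr)$. For the very thin slab $[K^{-1/4},1]\times[0,R^{-1/4}]$, the $\xi_2$-extent is below the relevant Fourier scale, so the phase $x_3\xi_2^4$ is essentially constant and the operator collapses to a one-dimensional extension problem in $\xi_1$ against a finite-type curve; here I would set up the wave packet decomposition adapted to the $\mathcal F_3(R,4)$ tiling (Remark~\ref{equ:rem:scale}), prove the associated square function estimate, and combine it with a $2$D Kakeya-type (bush/transversality) estimate to obtain \eqref{equ:omeg1partcon0} for all $p>3$. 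For the remaining part $\Omega_1\setminus[K^{-1/4},1]\times[0,R^{-1/4}]$, the $\xi_2$-variable lives on scales between $R^{-1/4}$ and $K^{-1/4}$: I would dyadically decompose in $\xi_2$ into blocks $[\sigma,2\sigma]$, apply case (b) rescaling on each $I_{j_1,\mu_1}\times[\sigma,2\sigma]$ to reduce to extension operators $\tilde{\mathcal E}$ with phase $\phi_1(\eta_1)+\eta_2^4$, prove a decoupling inequality for this family (of $\ell^2 L^p$ type, in the spirit of Bourgain--Demeter for the one nondegenerate direction combined with a flat/finite-type decoupling in $\eta_2$), and sum the pieces to get \eqref{equ:omeg1partcon1} for $p>3.3$, again with total loss a fixed power $C(K)$. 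Assembling \eqref{equ:omeg1partcon0} and \eqref{equ:omeg1partcon1} gives \eqref{equ:omeg1partcon}.

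Putting the four contributions into \eqref{equ:e01intr} yields the recurrence displayed above; since the self-similar term carries the negative power $K^{\frac{3}{2p}-\frac12}$ while the error terms carry only $R^\varepsilon$ times a power of $K$, fixing $K = K(\varepsilon,p)$ large enough makes $C K^{\frac{3}{2p}-\frac12} < K^{-\varepsilon/8}$ (say), and iterating the recurrence until the radius drops to $O_K(1)$ gives $Q_p(R)\lesssim_{\varepsilon,p} R^\varepsilon$, which is \eqref{equ:redgoal}. The main obstacle I anticipate is the $\Omega_1$-analysis: specifically, establishing the right square function / Kakeya estimate for the genuinely non-isotropic wave packets $T\in\mathbb T(\bar\theta)$ (whose cross-sections are $\lambda R^{1/2}\times\sigma R^{1/2}$ with $\lambda\not\sim\sigma$), and proving the decoupling inequality for the mixed phase $\phi_1(\eta_1)+\eta_2^4$ uniformly in the parameters $\lambda,\sigma$ with the loss only a power of $K$ and not of $R$. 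The exponent $3.3$ (rather than $3$) should enter precisely through the decoupling step on $\Omega_1\setminus[K^{-1/4},1]\times[0,R^{-1/4}]$, reflecting the still-imperfect decoupling/restriction theory in the one nondegenerate curved direction; the other three pieces all tolerate $p>3.25$ or better.
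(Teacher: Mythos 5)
Your proposal follows essentially the same route as the paper: the same four-region decomposition with the recurrence driven by the $\Omega_3$ rescaling, Guth's theorem on $\Omega_0$, and on $\Omega_1$ the same split into the thin slab $[K^{-1/4},1]\times[0,R^{-1/4}]$ (wave packets, square function, 2D Kakeya, valid for $p>3$) and the dyadic $\sigma$-blocks treated by generalized rescaling plus a Bourgain--Demeter-type decoupling in the $\xi_1$-direction. The only detail worth noting is that in the paper the threshold $3.3$ comes not from the decoupling alone but from interpolating the decoupling-plus-Guth bound at $L^{3.25}$ (which loses a power $\sigma^{-2/13}$) against the Buschenhenke--M\"uller--Vargas $L^{10/3}$ estimate (which gains $\sigma^{1/10}$); your outline omits this interpolation but is otherwise the same argument.
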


Let $Q_p(R)$ denote the least number such that
\begin{equation}\label{equ:defqpr}
  \|\mathcal Eg\|_{L^p(B_R)}\leq Q_p(R)\|g\|_{L^{\infty}(\Sigma)}.
\end{equation}

Let $1\ll K\ll R^{\varepsilon}$ for any fixed $\varepsilon>0$,
we divide $[0,1]^2$ into $\bigcup\limits_{j=0}^3\Omega_j$, as in Fig.6, where
\begin{align*}
&\Omega_0:=[K^{-\frac14},1]\times[K^{-\frac14},1],\\
&\Omega_1:=[K^{-\frac14},1]\times[0,K^{-\frac14}],\\
&\Omega_2:=[0,K^{-\frac14}]\times[K^{-\frac14},1],\\
&\Omega_3:=[0,K^{-\frac14}]\times[0,K^{-\frac14}],\\
&g_{\Omega_j}=g\big|_{\Omega_j},\;0\leq j\leq 3.
\end{align*}
\begin{center}
 \begin{tikzpicture}[scale=0.6]

\draw[->] (-0.2,0) -- (5,0) node[anchor=north]   {$\xi_1$};
\draw[->] (0,-0.2) -- (0,5)  node[anchor=east] {$\xi_2$};

 \draw[thick] (0,4) -- (4,4)
 (4,0)--(4,4)
 (0,1)--(4,1)
 (1,0)--(1,4) ;

 \draw (-0.15,0) node[anchor=north] {O}
      (1.35,-0.2) node[anchor=north] {$K^{-\frac14}$}
     (4,-0.08) node[anchor=north] {$1$}
    (2.5,2.5) node[anchor=north] {$\Omega_0$}
     (2.5,0.8) node[anchor=north] {$\Omega_1$}
     (0.5,2.5) node[anchor=north] {$\Omega_2$}
      (0.5,0.8) node[anchor=north] {$\Omega_3$};

 \draw  (-0.35, 1) node[anchor=east] {$K^{-\frac14}$}
       (0,4) node[anchor=east] {$1$};


\path (2.5,-2.0) node(caption){Fig. 6};
 \end{tikzpicture}
\end{center}
In this setting, we have
\begin{equation}\label{equ:egr03}
   \|\mathcal Eg\|_{L^p(B_R)}\leq \sum_{j=0}^{3}\big\|\mathcal Eg_{\Omega_j}\big\|_{L^p(B_R)}.
\end{equation}
Since the surface corresponding to the region $\Omega_0$ possesses two positive principle curvatures with lower bounds depending only on $K$,
 we have by \cite[Theorem 0.1]{Guth}
\begin{equation}\label{equ:ome0est}
  \|\mathcal Eg_{\Omega_0}\|_{L^p(B_R)}\leq C(K)C(\varepsilon)R^{\varepsilon}\|g\|_{L^{\infty}(\Sigma)},
\end{equation}
for $p>3.25$.

For $\Omega_3$, using rescaling as in Subsection 2.2 (a) and definition \eqref{equ:defqpr}, we
easily conclude that
\begin{equation}\label{equ:omeest}
  \|\mathcal Eg_{\Omega_3}\|_{L^p(B_R)}\leq CK^{\frac{3}{2p}-\frac{1}{2}}Q_p\Big(\tfrac{R}{K^{\frac14}}\Big)\|g\|_{L^{\infty}(\Sigma)}.
\end{equation}

For $\Omega_1$ and $\Omega_2$, it suffices to consider the estimate for $\Omega_1$-part by
 symmetry. We decompose $\Omega_1$ further into
\[\Omega_1=\bigcup \Omega_{\lambda}, \quad \Omega_{\lambda}=[\lambda,2\lambda]\times [0,K^{-\frac14}],\]
for dyadic $\lambda$ satisfying $K^{-\frac14}\leq \lambda \leq \frac{1}{2}$.
Let $\Omega_{\lambda,0}=[\lambda,2\lambda]\times [0,R^{-\frac14}]$, and  write
\begin{equation*}
\Omega_{\lambda}=\Omega_{\lambda,0}\bigcup \big(\cup_{\sigma}\Omega_{\lambda,\sigma}\big),
\end{equation*}
 where
    $\Omega_{\lambda,\sigma}:=[\lambda,2\lambda]\times([0, \sigma]\setminus [0, K^{-\frac14}\sigma])$ for $\sigma=K^{\frac{\ell}{4}}R^{-\frac14}$ with $1\leq \ell \leq [\log_K R]-1$. Then, we have
\begin{align}\label{equ:ome1part}
\|\mathcal Eg_{\Omega_1}\|_{L^p(B_R)}\leq\sum_{\lambda}\Big[\|\mathcal Eg_{\Omega_{\lambda,0}}\|_{L^p(B_R)}
+\sum_{\sigma}\|\mathcal Eg_{\Omega_{\lambda,\sigma}}\|_{L^p(B_R)}\Big].
\end{align}
We claim that
\begin{align}\label{equ:omelamsig}
   \|\mathcal Eg_{\Omega_{\lambda,\sigma}}\|_{L^p(B_R)}
   \lesssim_{\varepsilon}C(K)R^{\varepsilon}\|g\|_{L^{\infty}(\Sigma)},\; \;\;\;\;p>3.3\;
\end{align}
and
\begin{equation}\label{equ:claimlambda0}
  \|\mathcal{E}g_{\Omega_{\lambda,0}}\|_{L^p(B_R)}\lesssim C(K)R^{\frac{9}{4p}-\frac{3}{4}+\varepsilon}\|g\|_{L^{\infty}(\Sigma)},\;\;\; p\in[2,4],
\end{equation}
where  the implicit constant is independent of $\lambda$ and $\sigma$. Plugging the claim \eqref{equ:omelamsig} and \eqref{equ:claimlambda0} into \eqref{equ:ome1part}, we obtain
$$\|\mathcal Eg_{\Omega_1}\|_{L^p(B_R)}\leq C(K)C_{\varepsilon}R^{\varepsilon}\|g\|_{L^{\infty}(\Sigma)},\;\;\;p>3.3.$$
This inequality together with \eqref{equ:egr03}-\eqref{equ:omeest} implies
for $p>3.3$
$$\|\mathcal Eg\|_{L^p(B_R)}\leq \Big[C(K)C(\varepsilon)R^{\varepsilon}+2C(K)C_{\varepsilon}R^{\varepsilon}
+CK^{\frac{3}{2p}-\frac{1}{2}}Q_p\Big(\tfrac{R}{K^{\frac14}}\Big)\Big]\|g\|_{L^{\infty}(F_4^2)}, $$
which implies
$$Q_p(R)\leq C(K)C(\varepsilon)R^{\varepsilon}+2C(K)C_{\varepsilon}R^{\varepsilon}
+CK^{\frac{3}{2p}-\frac{1}{2}}Q_p\Big(\tfrac{R}{K^{\frac14}}\Big).$$
Hence, this recurrence inequality yields Theorem \ref{thm:main}.

Now, it remains to verify the claim \eqref{equ:omelamsig}  and \eqref{equ:claimlambda0}. For this purpose, we first estimate the contribution from $\Omega_{\lambda,\sigma}$ with $\sigma=K^{\frac{\ell}{4}}R^{-\frac14}$.

\subsection{Restriction estimates for $\Omega_{\lambda,\sigma}$}

In this subsection, we will prove the  claim \eqref{equ:omelamsig}
 for $K^{-\frac14}\leq \lambda \leq \frac{1}{2}$ and $R^{-\frac14}\leq \sigma \leq K^{-\frac14}$.
\vskip0.15cm

Firstly, we establish a decoupling inequality as follows

\begin{proposition}\label{prop:decoupling}
Let $\Sigma_{\lambda,\sigma}:=\{(\xi_1,\xi_2,\xi_3): (\xi_1,\xi_2)\in [\lambda,2\lambda]\times[0,\sigma], \xi_3=\xi^4_1+\xi^4_2\}$.
 For each suitable function $F$ satisfying ${\rm supp}\;\hat{F}\subset \mathcal{N}_{\sigma^4}(\Sigma_{\lambda,\sigma})$ with $\mathcal{N}_{{R^{-1}}}(\Sigma_{\lambda,\sigma})$ being $R^{-1}$-neighborhood of $\Sigma_{\lambda,\sigma}$, there holds
\[\|F\|_{L^{p}(B_{\sigma^{-4}})}\lesssim_{\varepsilon}\sigma^{-\varepsilon}\Big(\sum_{\tau}\|F_{\tau}\|^2_{L^p(B_{\sigma^{-4}})}\Big)^{\frac12},
\;\;\; 2\leq p \leq 6,\]
where  $\tau$ denotes $\lambda^{-1}\sigma^2 \times \sigma \times \sigma^4$-slab contained in $\mathcal{N}_{\sigma^4}(\Sigma_{\lambda,\sigma})$.
\end{proposition}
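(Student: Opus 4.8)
The plan is to deduce Proposition~\ref{prop:decoupling} from the Bourgain--Demeter $\ell^2L^p$ decoupling theorem for a nondegenerate planar curve, after two reductions: a cylindrical reduction that eliminates the $\xi_2$-variable, and an affine rescaling that normalizes the curve $\xi_1\mapsto\xi_1^4$ on $[\lambda,2\lambda]$.

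The key structural observation is that one does \emph{not} decouple in the $\xi_2$-direction at all. Since $0\le\xi_2\le\sigma$ forces $0\le\xi_2^4\le\sigma^4$, the height contribution of $\xi_2^4$ is entirely absorbed by the $\sigma^4$-neighbourhood; concretely,
\[\mathcal{N}_{\sigma^4}(\Sigma_{\lambda,\sigma})\subset\Big\{(\xi_1,\xi_2,\xi_3):\ \xi_1\in[\lambda,2\lambda],\ \xi_2\in[0,\sigma],\ |\xi_3-\xi_1^4|\lesssim\sigma^4\Big\}\approx\mathcal{N}_{\sigma^4}(\Gamma_\lambda)\times[0,\sigma],\]
where $\Gamma_\lambda=\{(\xi_1,\xi_1^4):\xi_1\in[\lambda,2\lambda]\}$ is a curve in the $(\xi_1,\xi_3)$-plane, and every slab $\tau$ arises by taking a $\lambda^{-1}\sigma^2$-arc of $\mathcal{N}_{\sigma^4}(\Gamma_\lambda)$ and multiplying it by the full interval $[0,\sigma]$ in the $\xi_2$-variable. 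I would then freeze the physical variable $x_2$, apply a planar decoupling for $\Gamma_\lambda$ in the remaining two variables, and finally take $L^p$-norms in $x_2$ via Minkowski's inequality in $L^{p/2}$ (which requires only $p\ge2$). This reduces everything to the two-dimensional statement: if $\widehat G$ is supported in $\mathcal{N}_{\sigma^4}(\Gamma_\lambda)$ then
\[\|G\|_{L^p(B_{\sigma^{-4}})}\lesssim_\varepsilon\sigma^{-\varepsilon}\Big(\sum_{J}\|G_J\|_{L^p(B_{\sigma^{-4}})}^2\Big)^{1/2},\qquad 2\le p\le6,\]
where $J$ runs over the $\lambda^{-1}\sigma^2$-arcs of $\mathcal{N}_{\sigma^4}(\Gamma_\lambda)$.

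For this planar estimate I would normalize $\Gamma_\lambda$ by the linear change of variables $\xi_1=\lambda\eta_1$, $\xi_3=\lambda^4\eta_3$, which turns it into $\{(\eta_1,\eta_1^4):\eta_1\in[1,2]\}$, a curve whose curvature is comparable to an absolute constant; the neighbourhood thickness becomes $\delta:=(\sigma/\lambda)^4$, and the arcs $J$ become arcs of length $(\sigma/\lambda)^2=\delta^{1/2}$. This is precisely the regime of the Bourgain--Demeter $\ell^2L^p$ decoupling for a planar curve of non-vanishing curvature, valid for $2\le p\le6$ with constant $C_\varepsilon\delta^{-\varepsilon}$. Since $\ell^2L^p$ decoupling is invariant under invertible affine maps (the two sides carry the same Jacobian factor, and Fourier supports and caps transform covariantly), the same constant governs $\Gamma_\lambda$; and because $\lambda\le1$ we have $\delta^{-\varepsilon}=(\lambda/\sigma)^{4\varepsilon}\le\sigma^{-4\varepsilon}$, which after relabelling $\varepsilon$ gives the asserted bound. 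One also has to check that $B_{\sigma^{-4}}$ is an admissible localization scale: using $\lambda\le1$ and $\lambda\sigma^2\le1$ (both guaranteed by $K^{-1/4}\le\lambda\le\tfrac12$ and $R^{-1/4}\le\sigma\le K^{-1/4}$), after the rescaling $B_{\sigma^{-4}}$ still contains a ball of radius $\gtrsim\delta^{-1}$ in the curve's plane and $\gtrsim\sigma^{-1}$ in the frozen direction, so the standard passage between the neighbourhood-constrained (``global'') form of decoupling and its $B_{\delta^{-1}}$-localized form closes the argument.

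I expect the only genuinely non-routine point to be the structural observation above — that the finite-type direction $\xi_2$ needs no decoupling because $\xi_2^4\le\sigma^4$ on $[0,\sigma]$, so the problem collapses onto a cylinder over an honestly curved planar curve. Once this is in place, the remainder is affine invariance plus a citation to Bourgain--Demeter, together with the (purely clerical) bookkeeping of scales needed to ensure that no spurious power of $K$ or $R$ enters the constant, only the harmless factor $\lambda^{4\varepsilon}\le1$.
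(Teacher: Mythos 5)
Your proposal is correct and follows essentially the same route as the paper: freeze $x_2$ (no decoupling in the $\xi_2$-direction, since $\xi_2^4\le\sigma^4$ is absorbed by the neighbourhood), reduce to a planar decoupling for the curve $(t,t^4)$, $t\in[\lambda,2\lambda]$, and apply Bourgain--Demeter after an affine normalization, with the same ball-covering bookkeeping. The only cosmetic difference is that you rescale the curve to $\eta_1^4$ on $[1,2]$, whereas the paper translates and rescales to the perturbed parabola $6u^2+4u^3+u^4$ on $[0,1]$; both yield the uniform-in-$\lambda$ constant in the same way.
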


To prove Proposition \ref{prop:decoupling}, it is sufficient to show the following lemma by freezing the $x_2$ variable. Without of loss generality, we assume that the ball $B_{\sigma^{-4}}$ is centred at the origin.
\begin{lemma}\label{lem:curve}
For $\Gamma_{\lambda}=\{(t,t^4): t\in [\lambda,2\lambda]\}$ and any suitable function $G$ satisfying ${\rm supp}\;\hat{G}\subset\mathcal{N}_{\sigma^4}(\Gamma_{\lambda})$, it holds
\begin{equation}\label{equ:curvedec}
  \|G\|_{L^p(B^2_{\sigma^{-4}})}\lesssim_{\varepsilon}\sigma^{-\varepsilon} \big(\sum_{\theta}\big\| G_{\theta}\big\|^2_{L^p(B^2_{\sigma^{-4}})}\big)^{1/2},\;\;\; 2\le p\le6,
\end{equation}
 where $\theta: \lambda^{-1}\sigma^2\times \sigma^4$-slab.
\end{lemma}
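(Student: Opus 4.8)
The plan is to reduce the decoupling for the curved piece $\Gamma_\lambda = \{(t,t^4):t\in[\lambda,2\lambda]\}$ to the classical $\ell^2$-decoupling for the parabola, via an affine rescaling that flattens the curve appropriately. First I would perform the change of variable $t = \lambda(1+s)$ with $s\in[0,1]$, so that $t^4 = \lambda^4(1+s)^4 = \lambda^4(1 + 4s + 6s^2 + 4s^3 + s^4)$. The linear term $4\lambda^4 s$ can be absorbed by a linear shear in the frequency variable (equivalently, a translation in physical space, which does not affect $L^p$ norms on a ball), and the constant term is harmless. After also rescaling the second coordinate by $\lambda^{-4}$, the curve $\Gamma_\lambda$ becomes, up to affine equivalence, the graph of $\phi(s) = 6s^2 + 4s^3 + s^4$ over $s\in[0,1]$, a curve with $\phi'' \sim 1$ and $|\phi'''|,|\phi^{(4)}|\lesssim 1$ — i.e., a small perturbation of the parabola. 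The slabs $\theta$ of dimension $\lambda^{-1}\sigma^2\times\sigma^4$ must be tracked carefully under this map: the $\xi_1$-side $\lambda^{-1}\sigma^2$ corresponds in the $s$-variable to width $\lambda^{-2}\sigma^2$, and since $\sigma\ge R^{-1/4}$ relative to the relevant scales one checks that these are exactly the canonical caps of length $\delta^{1/2}$ for the parabola at scale $\delta = \lambda^{-4}\sigma^4$, with the $\xi_2$-thickness $\sigma^4$ matching $\lambda^4\delta = $ the $R^{-1}$-type neighborhood on the rescaled side. The ball $B^2_{\sigma^{-4}}$ in physical space transforms to a ball of the dual radius $\delta^{-1}$, so the geometry is consistent.

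Having made this reduction, I would invoke the Bourgain–Demeter $\ell^2$-decoupling theorem for the parabola (or, since we only need the range $2\le p\le 6$ rather than the full range up to $p=6$ being sharp, the classical $\ell^2 L^2$ orthogonality at the endpoint $p=2$ combined with Bourgain–Demeter at $p=6$, interpolated), which is stable under $C^2$-small perturbations of the parabola — this stability is standard and is exactly why we arranged for $\phi$ to have $\phi''\sim 1$ with bounded higher derivatives uniformly in $\lambda,\sigma$. This yields the bound with the $\varepsilon$-loss $\delta^{-\varepsilon} = (\lambda^{-4}\sigma^4)^{-\varepsilon}\lesssim \sigma^{-\varepsilon}$ after absorbing the harmless power of $\lambda^{-1}\le K^{1/4}\ll R^\varepsilon$ (one can also simply note $\lambda\ge K^{-1/4}$ so $\lambda^{-4\varepsilon}$ is dominated by $\sigma^{-\varepsilon}$ up to adjusting $\varepsilon$). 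Transferring back through the (measure-preserving up to constants) affine change of variables gives \eqref{equ:curvedec}.

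The main obstacle I anticipate is purely bookkeeping: verifying that the canonical decoupling caps for the rescaled parabola at scale $\delta$ pull back to \emph{exactly} the slabs $\theta$ of size $\lambda^{-1}\sigma^2\times\sigma^4$ claimed in the statement, rather than to slabs off by powers of $\lambda$. The point is that the natural parabola caps have dimensions $\delta^{1/2}\times\delta$ in $(s,\phi)$-coordinates, and one must undo \emph{both} the $t=\lambda(1+s)$ scaling (a factor $\lambda$ in the first coordinate) and the $\lambda^{-4}$ scaling in the second coordinate, then check $\delta^{1/2}\cdot\lambda = \lambda^{-1}\sigma^2$ and $\delta\cdot\lambda^4 = \sigma^4$, both of which hold since $\delta=\lambda^{-4}\sigma^4$. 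A secondary point requiring care is that the hypothesis ${\rm supp}\,\hat G\subset\mathcal N_{\sigma^4}(\Gamma_\lambda)$ matches the neighborhood thickness $\lambda^4\delta$ demanded by the parabola decoupling at scale $\delta$ — again this is consistent by the same computation. Once these identifications are confirmed, the rest is a direct citation of parabola decoupling and its perturbation-stability, and the upgrade from Lemma \ref{lem:curve} to Proposition \ref{prop:decoupling} is the stated routine freezing of the $x_2$ variable together with Minkowski's and Fubini's inequalities.
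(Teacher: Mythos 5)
Your proposal is correct and follows essentially the same route as the paper: the affine change of variables $t=\lambda u+\lambda$ with the shear absorbing the linear term, reduction to the perturbed parabola $\phi(u)=6u^2+4u^3+u^4$ with $\phi''\sim1$, and an application of Bourgain--Demeter at scale $\delta=\lambda^{-4}\sigma^4$, with exactly the cap bookkeeping you describe. The only detail you elide is that the image of $B^2_{\sigma^{-4}}$ under the affine map is not a ball of radius $\delta^{-1}$ but a larger ellipse, which the paper handles by tiling it with balls $B^2_{\lambda^4\sigma^{-4}}$, decoupling on each, and summing via Minkowski --- a routine step.
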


With Lemma \ref{lem:curve} in hand, we prove Proposition \ref{prop:decoupling} as follows. We denote $F(\cdot,x_2,\cdot)$ by $G$. By the arguments in \cite{Guth18}, it is easy to see that ${\rm supp}\;\hat{G}$ is contained in the projection of ${\rm supp}\;\hat{F}$ on the plane $\xi_2=0$, that is, ${\rm supp}\;\hat{G}\subset \mathcal{N}_{\sigma^{-4}}(\Gamma_{\lambda})$.  Integrating the both sides of \eqref{equ:curvedec} with respect to $x_2$-variable from $-\sigma^{-4}$ to $\sigma^{-4}$, we obtain Proposition 3.2.

\begin{proof}[{\bf Proof of Lemma \ref{lem:curve}:}]
It is equivalent to prove for any suitable function $h(t)$ on $[\lambda,2\lambda]$
\begin{equation}\label{equ:equivalentcurvedec}
\Vert E_{[\lambda,2\lambda]}h \Vert_{L^p(B^2_{\sigma^{-4}})}\lesssim_{\varepsilon}\sigma^{-\varepsilon}\Big(\sum_{I:\lambda^{-1}\sigma^2-interval}\Vert Eh_{I} \Vert^2_{L^p(B^2_{\sigma^{-4}})}\Big)^{1/2},
\end{equation}
where \[(E_{[\lambda,2\lambda]}h)(y):=\int_{[\lambda,2\lambda]}h(t)e(y_1t+y_2t^4)dt.\]
  Using variable substitution as $t=\lambda u+\lambda$ in \eqref{equ:equivalentcurvedec}, we have
  \[\vert(E_{[\lambda,2\lambda]}h)(y)\vert=\vert(E^{\rm Parp}_{[0,1]}\tilde{h})(\tilde{y})\vert,\]
  where
  \[\tilde{h}(u)=\lambda h(\lambda u+\lambda), \;\;\;\tilde{y}=(\lambda y_1+4\lambda^4y_2,\lambda^4y_2),\]
  and $E^{\rm Parp}_{[0,1]}$ denotes the Fourier extension operator associated with the non-degenerate curve: \[\gamma(u)=(u,6u^2+4u^3+u^4), u\in [0,1].\]
  Let $\mathcal{L}$ denote the mapping
   \[y \rightarrow (\lambda y_1+4\lambda^4y_2,\lambda^4y_2)=:(\tilde{y}_1, \tilde{y}_2)=\tilde{y}.\]
   Therefore, one has
  \[\Vert E_{[\lambda,2\lambda]}h \Vert^p_{L^p(B^2_{\sigma^{-4}})}=\lambda^{-5}\Vert E^{\rm Parp}_{[0,1]}\tilde{h} \Vert^p_{L^p(\mathcal{L}(B^2_{\sigma^{-4}}))},\]
  which equals to
  \[\lambda^{-5}\sum_{B^2_{\lambda^4 \sigma^{-4}}\subset \mathcal{L}(B^2_{\sigma^{-4}})}\Vert E^{\rm Parp}_{[0,1]}\tilde{h} \Vert^p_{L^p(B^2_{\lambda^4\sigma^{-4}})}.\]
  Using Bourgain-Demeter's decoupling inequality for curves with non-vanishing curvature in \cite{BD}, one has for $2\leq p \leq 6$
  \begin{align*}
  &\sum_{B^2_{\lambda^4\sigma^{-4}}\subset \mathcal{L}(B^2_{\sigma^{-4}})}\Vert E^{\rm Parp}_{[0,1]}\tilde{h} \Vert^p_{L^p(B^2_{\lambda^4\sigma^{-4}})} \\
  \lesssim_{\varepsilon}&\sigma^{-\varepsilon} \sum_{B^2_{\lambda^4\sigma^{-4}}\subset \mathcal{L}(B^2_{\sigma^{-4}})} \Big(\sum_{\tilde{I}:\lambda^{-2}\sigma^2-interval} \big\| E^{\rm Parp}_{\tilde{I}}\tilde{h}\big\|^2_{L^p(B^2_{{\lambda^4}\sigma^{-4}})}\Big)^{p/2}\\
    \le&C_{\varepsilon}\sigma^{-\varepsilon}\Big(\sum_{\tilde{I}:\lambda^{-2}\sigma^2-interval}\big\| E^{\rm Parp}_{\tilde{I}}\tilde{h}\big\|^2_{L^p(\mathcal{L}(B^2_{\sigma^{-4}}))}\Big)^{p/2}.\end{align*}
  Taking the inverse of $\mathcal{L}$, we get the desired estimate \eqref{equ:equivalentcurvedec}.
\end{proof}

\begin{lemma}\label{lem:omesing}
For all $p>3.3$, it holds
\begin{equation}\label{equ:lamsigma}
  \|{\mathcal E}g_{\Omega_{\lambda,\sigma}}\|_{L^p(B_R)}
  \lesssim_{\varepsilon}C(K)R^{\varepsilon}\|g\|_{L^{\infty}(\Sigma)},
\end{equation}
where $C(K)$ is a fixed power of $K$.
\end{lemma}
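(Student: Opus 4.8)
The plan is to reduce the estimate on the ball $B_R$ to many estimates on smaller balls of radius $\sigma^{-4}$, on each of which Proposition \ref{prop:decoupling} applies, and then to gain from orthogonality plus the favorable $L^p$-$L^2$ bound for a single slab. Recall that for the region $\Omega_{\lambda,\sigma}$ we have $\lambda\in[K^{-1/4},\tfrac12]$ and $\sigma=K^{\ell/4}R^{-1/4}$ with $1\le \ell\le[\log_K R]-1$; in particular $R^{-1/4}\le\sigma\le K^{-1/4}$, so $\sigma^{-4}\le R$ and $\sigma^{-4}\ge K$. First I would fix a finitely-overlapping cover of $B_R$ by balls $B=B(y_B,\sigma^{-4})$. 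On each such $B$, the function $\mathcal Eg_{\Omega_{\lambda,\sigma}}$ has Fourier support in an $O(\sigma^4)$-neighborhood of $\Sigma_{\lambda,\sigma}$ up to the usual Schwartz tail (the relevant frequency slab $|\xi_3-(\xi_1^4+\xi_2^4)|\lesssim\sigma^4$ is forced by the size of $\Omega_{\lambda,\sigma}$ in the $\xi_2$ direction, since $0<\xi_2<\sigma$ gives $\xi_2^4<\sigma^4$ and the oscillation in $\xi_1$ over an interval of length $\lambda^{-1}\sigma^2$ contributes $\lesssim\lambda^3\cdot\lambda^{-1}\sigma^2=\lambda^2\sigma^2\le\sigma^2\le\sigma^4\cdot\sigma^{-2}$—one checks that the natural slab thickness matches $\sigma^4$ after accounting for the $\lambda^{-1}\sigma^2\times\sigma$ cap dimensions). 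Applying Proposition \ref{prop:decoupling} on each ball $B$ with $p$ in the allowed range $2\le p\le 6$ (so in particular for our $p>3.3$, provided $p\le 6$; for $p>6$ interpolate with the trivial $L^\infty$ bound or note monotonicity is not available and instead run the argument at $p=6$ and interpolate up), we obtain
\begin{equation*}
\|\mathcal Eg_{\Omega_{\lambda,\sigma}}\|_{L^p(B)}\lesssim_\varepsilon \sigma^{-\varepsilon}\Big(\sum_\tau \|\mathcal Eg_\tau\|_{L^p(w_B)}^2\Big)^{1/2},
\end{equation*}
where $\tau$ runs over the $\lambda^{-1}\sigma^2\times\sigma\times\sigma^4$ slabs tiling $\mathcal N_{\sigma^4}(\Sigma_{\lambda,\sigma})$ and $w_B$ is a weight adapted to $B$.

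Next I would raise to the $p$-th power, sum over the cover of $B_R$, and use that the weights $w_B$ have bounded overlap together with Minkowski's inequality (legitimate since $p/2\ge1$) to pass the $\ell^2$ sum in $\tau$ outside the sum over balls. This yields
\begin{equation*}
\|\mathcal Eg_{\Omega_{\lambda,\sigma}}\|_{L^p(B_R)}\lesssim_\varepsilon \sigma^{-\varepsilon}\Big(\sum_\tau \|\mathcal Eg_\tau\|_{L^p(B_R)}^2\Big)^{1/2}.
\end{equation*}
Now each slab $\tau$ is a single cap of the surface of dimensions $\lambda^{-1}\sigma^2\times\sigma$. Here I would implement the generalized rescaling of Subsection 2.2 case (b)/(c) adapted to such a cap: after the affine change of variables straightening $\tau$ to $[0,1]^2$, the extension operator $\mathcal Eg_\tau$ becomes (a constant times) $\widetilde{\mathcal E}_{[0,1]^2}\widetilde g$ for a phase of the perturbed-paraboloid type, but more efficiently I would just use the trivial $L^p$-$L^\infty$ bound on a single cap: by Hölder in physical space and the fact that $\mathcal Eg_\tau$ is essentially constant on dual boxes of dimensions $\lambda\sigma^{-2}\times\sigma^{-1}\times\sigma^{-4}$,
\begin{equation*}
\|\mathcal Eg_\tau\|_{L^p(B_R)}\lesssim |\tau|\,\big(\text{box volume}\big)^{1/p}\,\big(R^3/\text{box volume}\big)^{1/p}\|g\|_{L^\infty}\sim |\tau|\,R^{3/p}\|g\|_{L^\infty},
\end{equation*}
with $|\tau|\sim\lambda^{-1}\sigma^3$. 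Combined with the fact that the number of slabs is $\#\{\tau\}\sim \lambda^2 K^{1/2}$ (the $\xi_1$-interval $[\lambda,2\lambda]$ of the $\Omega_{\lambda,\sigma}$ decomposition splits into $\sim\lambda^2 K^{1/2}$ pieces of length $\lambda^{-1}K^{-1/2}$—wait, one must instead use the slab length $\lambda^{-1}\sigma^2$ here, giving $\#\{\tau\}\sim \lambda\cdot(\lambda^{-1}\sigma^2)^{-1}\cdot\sigma\cdot\sigma^{-1}=\lambda^{2}\sigma^{-2}$ in $\xi_1$ and only $O(1)$ in $\xi_2$), the $\ell^2$ sum contributes a factor $(\#\{\tau\})^{1/2}\sim\lambda\sigma^{-1}$ and we arrive at a bound of the shape $\sigma^{-\varepsilon}\cdot\lambda\sigma^{-1}\cdot\lambda^{-1}\sigma^3\cdot R^{3/p}=\sigma^{-\varepsilon}\sigma^2 R^{3/p}$. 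Since $\sigma\le K^{-1/4}\le 1$ and $\sigma\ge R^{-1/4}$, the factor $\sigma^2\le 1$ is harmless, but $R^{3/p}$ is far too large; so the trivial single-cap bound is \emph{not} enough and one must instead feed each $\mathcal Eg_\tau$ back into the rescaled global estimate.

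The correct route, and the main obstacle, is therefore the bookkeeping of the rescaling in the slab estimate: for each $\tau$, apply case (b) (since one side of $\tau$ touches $\xi_2=0$ only through $\Omega_{\lambda,\sigma}$, where $\xi_2\ge K^{-1/4}\sigma>0$, it is actually the non-degenerate case (c)) to write $\|\mathcal Eg_\tau\|_{L^p(B_R)}\lesssim (\text{Jacobian factor})\cdot\|\mathcal E^{\rm Parp}_{[0,1]^2}\widetilde g\|_{L^p(B_{R'})}$ on a rescaled ball $B_{R'}$ with $R'\sim \sigma^4 R\le R^{?}$; then invoke Guth's restriction estimate \cite[Theorem 0.1]{Guth} for perturbed paraboloids, valid for $p>3.25<3.3$, to bound the latter by $C(\varepsilon)(R')^\varepsilon\|\widetilde g\|_\infty\lesssim C(\varepsilon)R^\varepsilon\|g\|_\infty$. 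Tracking the Jacobian and the normalization of $\widetilde g$, together with the $\ell^2$-sum over the $\sim\lambda^2\sigma^{-2}$ values of $\tau$ and the decoupling loss $\sigma^{-\varepsilon}\le R^{\varepsilon}$, produces a total power of $K$ (coming from $\sigma=K^{\ell/4}R^{-1/4}$ only through ratios that telescope to a fixed power of $K$) times $R^\varepsilon\|g\|_{L^\infty(\Sigma)}$. The delicate point is checking that all $\lambda,\sigma,K$ powers assemble into a \emph{fixed} (i.e. $\ell$-independent and $\lambda$-independent) power of $K$—this is exactly where the $K$-regular decomposition $\mathcal F_3(K,4)$ pays off, because it guarantees the slab and tube dimensions scale homogeneously. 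I expect the condition $p>3.3$ (rather than $p>3.25$) to enter only through the endpoint $p=6$ restriction in Proposition \ref{prop:decoupling} combined with the later $\Omega_{\lambda,0}$ analysis, not through this lemma, so here any $p>3.25$ should suffice and the stated $p>3.3$ is comfortably in range.
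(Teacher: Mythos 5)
Your first half matches the paper's proof: decouple on balls of radius $\sigma^{-4}$ via Proposition \ref{prop:decoupling}, sum over such balls inside $B_R$, then rescale each $\lambda^{-1}\sigma^2\times\sigma$ cap to a perturbed paraboloid (using that $\xi_2\ge K^{-1/4}\sigma$ on $\Omega_{\lambda,\sigma}$, so the curvature in the second variable is bounded below by a power of $K$ after rescaling) and invoke Guth's estimate for $p\ge 3.25$. The genuine gap is in the bookkeeping you postponed, and it invalidates your conclusion about the range of $p$. Carrying out the rescaling as in \eqref{byrecaling}, the change of variables in $x$ costs $(\lambda^{-1}\sigma^{7})^{-1/p}$ while the renormalization of $g$ gains $\lambda^{-1}\sigma^{3}$, so each cap obeys $\|\mathcal Eg_{\tau\cap\Omega_{\lambda,\sigma}}\|_{L^p(B_R)}\lesssim_{\varepsilon} C(K)R^{\varepsilon}\sigma^{3-\frac7p}\|g\|_{L^\infty}$; with ${\rm Card}\{\tau\}\approx\lambda^{2}\sigma^{-2}$ the $\ell^2$ sum then produces a total factor $\lambda\,\sigma^{2-\frac7p}$. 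For $3.25\le p<3.5$ this is a \emph{negative} power of $\sigma$, and since $\sigma=K^{\ell/4}R^{-1/4}$ can be as small as $\sim R^{-1/4}$, it is a positive power of $R$, not a fixed power of $K$: your claim that the $\lambda,\sigma,K$ powers ``telescope to a fixed power of $K$'' fails precisely because $\sigma$ depends on $R$. Thus decoupling plus rescaling plus Guth, by itself, proves the lemma only for $p\ge 3.5$.

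The paper closes this gap with a second ingredient you omitted: the bilinear theorem of Buschenhenke--M\"uller--Vargas gives $\|\mathcal Eg_{\Omega_{\lambda,\sigma}}\|_{L^{10/3+}(B_R)}\lesssim_{\varepsilon,\lambda}R^{\varepsilon}\sigma^{1/10}\|g\|_{L^\infty(\Sigma)}$, i.e.\ \eqref{equ:lamsigma-00}, the gain $\sigma^{1/10}$ coming from $|\Omega_{\lambda,\sigma}|\le\lambda\sigma$ together with the $L^{p/(p-3)}$ norm on the right-hand side; the decoupling argument is then run only at $p=3.25$, yielding \eqref{equ:claim1} with the loss $\sigma^{-2/13}$. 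Interpolating these two bounds, the $\sigma$-exponent becomes nonnegative exactly when $p\ge 3.3$, which is where the threshold in the lemma (and hence in Theorem \ref{thm:main}) comes from. Your closing expectation --- that any $p>3.25$ should suffice here and that $3.3$ enters only through the later $\Omega_{\lambda,0}$ analysis --- is therefore incorrect: this lemma is precisely the source of the exponent $3.3$.
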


\begin{proof} Due to Theorem 1.2 in  \cite{BMV}, we have
\[\|{\mathcal E}g_{\Omega_{\lambda,\sigma}}\|_{L^{p}(B_R)}\lesssim_{\varepsilon}
R^{\varepsilon}\|g_{\Omega_{\lambda,\sigma}}\|_{L^{\frac{p}{p-3}}}, \;\;\;\text{for all}\; \;p>\tfrac{10}{3}.\]
This estimate together with $\vert {\Omega}_{\lambda,\sigma} \vert \leq \lambda\sigma$
yields
\begin{equation}\label{equ:lamsigma-00}\|{\mathcal E}g_{\Omega_{\lambda,\sigma}}\|_{L^{\frac{10}{3}+}(B_R)}
\lesssim_{\varepsilon,\lambda}R^{\varepsilon}\sigma^{\frac{1}{10}}\|g\|_{L^{\infty}(\Sigma)}.\end{equation}
Hence, \eqref{equ:lamsigma} can be reduced to prove the following claim:
\begin{equation}\label{equ:claim1}
\|{\mathcal E}g_{\Omega_{\lambda,\sigma}}\|_{L^{3.25}(B_R)}
\lesssim_{\varepsilon,\lambda}C(K)R^{\varepsilon}\sigma^{-\frac{2}{13}}\|g\|_{L^{\infty}(\Sigma)}.
\end{equation}
In fact, by interpolation with \eqref{equ:lamsigma-00}, we deduce that
\[\|\mathcal Eg_{\Omega_{\lambda,\sigma}}\|_{L^p(B_R)}\lesssim_{\varepsilon}C(K)R^{\varepsilon}\|g\|_{L^{\infty}(\Sigma)}, \;\;\; p>3.3.\]
It remains to verify \eqref{equ:claim1}. To this end, we employ an equivalent form of decoupling inequality in Proposition \ref{prop:decoupling}
\begin{equation}\label{equ:decoupling}
\|{\mathcal E}g_{\Omega_{\lambda,\sigma}}\|_{L^{p}(B_{\sigma^{-4}})}\lesssim_{\varepsilon}\sigma^{-\varepsilon}\Big(\sum_{\tau}\|{\mathcal E}g_{\tau\cap \Omega_{\lambda,\sigma}}\|^2_{L^p(B_{\sigma^{-4}})}\Big)^{1/2},
\end{equation}
where {the summation is taken over $\lambda^{-1}\sigma^2 \times \sigma$-rectangles}. Summing over all the balls $B_{\sigma^{-4}}\subset B_R$, one has
\[\|{\mathcal E}g_{\Omega_{\lambda,\sigma}}\|_{L^{p}(B_R)}\lesssim_{\varepsilon}\sigma^{-\varepsilon}\Big(\sum_{\tau}\|{\mathcal E}g_{\tau\cap \Omega_{\lambda,\sigma}}\|^2_{L^p(B_R)}\Big)^{1/2}.\]
By the so-called generalized rescaling technique, it is not difficult to show
\begin{equation}\label{byrecaling}
\|{\mathcal E}g_{\tau \cap \Omega_{\lambda,\sigma}}\|_{L^p(B_R)}\lesssim_{\varepsilon}C(K)R^{\varepsilon}\sigma^{3-\frac{7}{p}}\|g\|_{L^{\infty}(\Sigma)},
\;\;\; p\geq 3.25,
\end{equation}
where $C(K)$ is a fixed power of $K$.
In fact, making use of changing variables
$$\xi_1=\lambda+\lambda^{-1}\sigma^2\eta_1, \;\;\;\xi_2=\sigma\eta_2,$$
 we have
\[\vert {\mathcal E}g_{\tau \cap \Omega_{\lambda,\sigma}}\vert = \Big\vert \int_{[0,1]\times [K^{-1/4},1]}\tilde{g}(\eta_1,\eta_2)e[\tilde{x}_1\eta_1+\tilde{x}_2\eta_2+\tilde{x}_3(\phi_1(\eta_1)+\eta^4_2)]d\eta_1d\eta_2\Big\vert=:\vert{\mathcal E}^{parp}\tilde{g}(\tilde{x})\vert,\]
where
\begin{equation*}\left\{\begin{aligned}
&\tilde{x}_1:=\lambda^{-1}\sigma^2x_1+4\lambda^2\sigma^2x_3, \;\;\; \tilde{x}_2:=\sigma x_2, \;\;\; \tilde{x}_3:=\sigma^4x_3,\\
&\tilde{g}(\eta_1,\eta_2):=\lambda^{-1}\sigma^3 g(\lambda+\lambda^{-1}\sigma^2\eta_1,\sigma\eta_2),\\
&\phi_1(\eta_1):=6\eta^2_1+4\lambda^{-2}\sigma^2\eta^3_1+\lambda^{-4}\sigma^4\eta^4_1.\end{aligned}\right.\end{equation*}
Clearly, we have $\partial^2_1 \phi_1\approx 1$. Since $K^{-1/4}\leq \lambda \leq \frac{1}{2}$ and $\sigma \leq K^{-1/4}$, it follows that
\[\vert \partial^3_1 \phi_1 \vert \lesssim 1,\;\; \; \vert \partial^4_1 \phi_1 \vert \lesssim 1.\]
This together with $K^{-1/4}\leq \eta_2 \leq 1$ implies that the  phase function $\psi_1(\eta):=\phi_1(\eta_1)+\eta^4_2$ is admissible so that we can apply Guth's restriction estimate for perturbed paraboloid in 3D to $\vert{\mathcal E}^{parp}\tilde{g}(\tilde{x})\vert$. Therefore, one has
\[\Vert{\mathcal E}^{parp}\tilde{g}(\tilde{x})\Vert_{L^p(B_{\sigma R})}\lesssim_{\varepsilon}C(K)R^{\varepsilon}\Vert \tilde{g} \Vert_{L^{\infty}}, \;\;\;\; p\geq 3.25,\]
where $C(K)$ is a fixed power of $K$.
Note that
\begin{align*}\big\|{\mathcal E}g_{\tau \cap \Omega_{\lambda,\sigma}}\big\|_{L^p(B_R)}\lesssim_{\lambda}\sigma^{-\frac{7}{p}}\Vert{\mathcal E}^{parp}\tilde{g}(\tilde{x})\Vert_{L^p(B_{\sigma R})}\lesssim_{\lambda}C(K)R^{\varepsilon}\sigma^{3-\frac7p}\Vert g \Vert_{L^{\infty}},\end{align*}
we deduce \eqref{byrecaling}, as required. Plugging \eqref{byrecaling} into \eqref{equ:decoupling},  we derive
by making use of  ${\rm Card} \{\tau \} \approx \lambda^2 \sigma^{-2}$
\[\|{\mathcal E}g_{\Omega_{\lambda,\sigma}}\|_{L^{3.25}(B_R)}
\lesssim_{\varepsilon,\lambda}C(K)R^{\varepsilon}\sigma^{-\frac{2}{13}}\|g\|_{L^{\infty}(\Sigma)},\;\;\; K^{-1/4}\leq \lambda \leq \frac{1}{2}.\]
 This completes the proof of Lemma \ref{lem:omesing}.
\end{proof}

\vskip 0.12in

Now,  we show the claim \eqref{equ:claimlambda0} from the contribution of $\Omega_{\lambda,0}$-part by  utilizing  the wave packet decomposition and a square function estimate.

\subsection{Wave packet decomposition associated with $\Omega_{\lambda,0}$}\label{subs:wavepack}

In this subsection, we adopt notations from Hickman-Vitturi's lecture  \cite{HV}.
Let $\phi(x)$  be a bump function whose Fourier transform is supported on $[-\frac{1}{2},\frac{1}{2}]^3$
satisfies  $\widehat{\phi}=1$ on $[-\frac{1}{4},\frac{1}{4}]^3$.
For any tube $T$, denote by  $a_T$ an affine transformation whose linear part has determinant $\sim\vert T \vert$ which maps $[-\frac{1}{4},\frac{1}{4}]^3$ to $T$ bijectively and define $\phi_{T}:= \phi \circ a_{T}^{-1}$, $T\in \mathbb{T}(\bar{\theta})$ for $\theta\subset \Omega_{\lambda,0}$.
Note that $\theta$ has size $\lambda^{-1}R^{-\frac12}\times R^{-\frac14}$, the wave packet adapted to $T\in \mathbb{T}(\bar{\theta})$ is given by
\[\psi_{T}(x):= \vert T \vert^{-1}e^{2\pi i \xi_{\theta}\cdot x}\phi_{T}(x),\]
where $\xi_{\theta}$ denotes the centre of $\bar{\theta}$.

\begin{lemma}[Bourgain\cite{Bo91}, Tao\cite{Tao}]\label{lem:hickmanvik}
Let $f$ be a smooth function on $\mathbb{R}^3$. For each given slab $\bar{\theta}:\lambda^{-1}R^{-\frac12}\times R^{-\frac14}\times R^{-1}$, there exists a decomposition
\[f_{\bar{\theta}}(x)=\sum_{T\in \mathbb{T}(\bar{\theta})}f_{T}\psi_{T}(x),\]
where the constants $f_{T}$ satisfy
\begin{equation}\label{equ:wavepakhv}
  \Big(\sum_{T\in \mathbb{T}(\bar{\theta)}}\vert f_{T} \vert^2\Big)^{\frac12}\lesssim \|\hat{f}_{\bar{\theta}}\|_{L^2_{\rm avg}(\bar{\theta})},
\end{equation}
where
\[\|f\|_{L^2_{\rm avg}(E)}:= \Big(\frac{1}{\vert E \vert}\int_{E}\vert f(x) \vert^2dx\Big)^{\frac12}.\]
\end{lemma}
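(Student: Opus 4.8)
The plan is to establish the wave packet decomposition of Lemma \ref{lem:hickmanvik} by a straightforward adaptation of the classical construction of Bourgain \cite{Bo91} and Tao \cite{Tao}, the only twist being that the slab $\bar\theta$ is anisotropic with dimensions $\lambda^{-1}R^{-1/2}\times R^{-1/4}\times R^{-1}$ rather than the isotropic $R^{-1/2}\times R^{-1/2}\times R^{-1}$ dimensions that appear in the paraboloid case. Since $f_{\bar\theta}$ has Fourier support in $\bar\theta$, the uncertainty principle dictates that $f_{\bar\theta}$ is morally constant on dual boxes $T$ of dimensions $\lambda R^{1/2}\times R^{1/4}\times R$; the tiling $\mathbb T(\bar\theta)$ is precisely a finitely overlapping cover of $\mathbb R^3$ by translates of such a box oriented along $\bar\theta$, as recorded in Remark \ref{equ:rem:scale}. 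First I would produce a smooth partition of unity $\{\chi_T\}_{T\in\mathbb T(\bar\theta)}$ subordinate to $\mathbb T(\bar\theta)$, with $\chi_T = \chi\circ a_T^{-1}$ for a fixed bump $\chi$ adapted to $[-1/4,1/4]^3$, so that $\sum_T \chi_T \equiv 1$. Then I would write $f_{\bar\theta} = \sum_T \chi_T f_{\bar\theta}$ and, exploiting that each $\chi_T f_{\bar\theta}$ still has Fourier support in a slightly dilated slab, replace $\chi_T f_{\bar\theta}$ by its value against the normalized wave packet: set $f_T := |T|\,\langle f_{\bar\theta},\psi_T\rangle / \|\psi_T\|$-type coefficients, or more cleanly define $f_T$ so that $f_T\psi_T$ is the ``local constant'' model of $\chi_T f_{\bar\theta}$, using that $\widehat\phi = 1$ on $[-1/4,1/4]^3$ makes $\phi_T$ act as a reproducing bump at the frequency scale of $\bar\theta$.

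The key computational step is the $\ell^2$ bound \eqref{equ:wavepakhv}. Here I would argue as follows: by Plancherel, $\|f_{\bar\theta}\|_{L^2(\mathbb R^3)}^2 = \|\hat f_{\bar\theta}\|_{L^2(\bar\theta)}^2 = |\bar\theta|\,\|\hat f_{\bar\theta}\|_{L^2_{\rm avg}(\bar\theta)}^2$. On the physical side, because the tubes $T$ have bounded overlap and $\psi_T$ is essentially $|T|^{-1}$ times a bump concentrated on $T$ (with rapidly decaying tails), the functions $\{f_T\psi_T\}_T$ are almost orthogonal in $L^2$, which gives $\sum_T |f_T|^2 \|\psi_T\|_{L^2}^2 \lesssim \|f_{\bar\theta}\|_{L^2}^2 + (\text{rapidly decaying tail errors})$. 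Since $\|\psi_T\|_{L^2}^2 \sim |T|^{-2}\cdot|T| = |T|^{-1} = |\bar\theta|$ (using $|T|\,|\bar\theta|\sim 1$, which is exactly the content of the dual-box normalization), this rearranges to $\sum_T |f_T|^2 \lesssim |\bar\theta|^{-1}\|f_{\bar\theta}\|_{L^2}^2 = \|\hat f_{\bar\theta}\|_{L^2_{\rm avg}(\bar\theta)}^2$, which is \eqref{equ:wavepakhv}. The reconstruction identity $f_{\bar\theta} = \sum_T f_T\psi_T$ up to acceptable (Schwartz-tail) errors then follows from the fact that $\widehat\phi = 1$ on the support of the rescaled $\hat f_{\bar\theta}$, so that convolution with $\phi_T$ reproduces $\chi_T f_{\bar\theta}$ exactly on its frequency support; summing over $T$ and using $\sum_T\chi_T = 1$ recovers $f_{\bar\theta}$.

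The main obstacle, such as it is, is bookkeeping the anisotropy: one must check that the affine map $a_T$ with $\det a_T \sim |T|$ correctly intertwines the slab geometry (the short directions $\lambda^{-1}R^{-1/2}$ and $R^{-1/4}$ on the frequency side become the long directions $\lambda R^{1/2}$ and $R^{1/4}$ on the spatial side) and that the bump $\phi$, whose Fourier transform lives on the \emph{unit} cube $[-1/2,1/2]^3$, pulls back under $a_T^{-1}$ to a function whose Fourier transform is adapted to a box containing $\bar\theta-\xi_\theta$; this is where the specific dimensions of $\bar\theta$ enter. The decomposition $\mathcal F_3(R,4)$ was designed (see Remark \ref{equ:rem:scale}) precisely so that all tubes in $\mathbb T(\bar\theta)$ share the common length $R$ in the long direction, which guarantees that the affine maps $a_T$ differ only by translation and a common linear part, keeping the almost-orthogonality argument uniform across $T$. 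Apart from this geometric bookkeeping, the proof is the standard one; I would present the partition of unity, the definition of $f_T$, the $L^2$ orthogonality estimate, and the reconstruction identity in that order, relegating the rapidly decaying tail estimates to routine remarks.
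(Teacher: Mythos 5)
The paper itself gives no proof of Lemma \ref{lem:hickmanvik}: it is quoted from \cite{Bo91,Tao} in the normalization of the Hickman--Vitturi notes \cite{HV}, where the argument is a Fourier-series (sampling) expansion of $\hat f_{\bar\theta}$ on a box comparable to $\bar\theta$. Your outline is close in spirit, but the central step has genuine gaps. You never actually define $f_T$: writing $f_{\bar\theta}=\sum_T\chi_T f_{\bar\theta}$ and declaring $f_T\psi_T$ to be the ``local constant model'' of $\chi_T f_{\bar\theta}$ begs the question, since $\chi_T f_{\bar\theta}$ is not a scalar multiple of the fixed profile $\psi_T$, and controlling the error made in that replacement is precisely the content of the lemma (whose decomposition is an exact identity, not an identity modulo Schwartz tails; note also that multiplying by the spatial cutoff $\chi_T$ enlarges the frequency support beyond the region where the rescaled $\hat\phi$ equals $1$, so the claimed exact reproduction does not hold as stated). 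More seriously, your derivation of \eqref{equ:wavepakhv} from ``almost orthogonality plus bounded overlap'' runs in the wrong direction: bounded overlap and rapid decay give the upper (Bessel/Schur) bound $\|\sum_T f_T\psi_T\|_{L^2}^2\lesssim\sum_T|f_T|^2\|\psi_T\|_{L^2}^2$, whereas \eqref{equ:wavepakhv} requires a lower, Riesz-sequence type bound for the family $\{\psi_T\}_{T\in\mathbb T(\bar\theta)}$; for finitely overlapping translates of a single bump $\phi$ with $\hat\phi$ supported in $[-\frac12,\frac12]^3$ such a lower bound need not hold, so one cannot bound abstractly defined coefficients by $\|f_{\bar\theta}\|_{L^2}$ this way.

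The fix is the standard construction, and it is shorter than your outline. Since $\hat f_{\bar\theta}$ is supported in the box $\bar\theta$ of dimensions $\lambda^{-1}R^{-1/2}\times R^{-1/4}\times R^{-1}$, expand it in a Fourier series on a comparable box: the frequencies of the series form the dual lattice, whose spacings $\lambda R^{1/2}\times R^{1/4}\times R$ are exactly the tube dimensions, so the terms are indexed by $T\in\mathbb T(\bar\theta)$ and the coefficients $f_T$ are (suitably normalized) samples of $f_{\bar\theta}$ at the tube centers. Parseval for this series gives \eqref{equ:wavepakhv} on the nose, the factor $|\bar\theta|^{-1}$ producing the averaged norm $\|\hat f_{\bar\theta}\|_{L^2_{\rm avg}(\bar\theta)}$. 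Finally, multiply the series by $\hat\phi$ composed with the affine rescaling adapted to $\bar\theta$, which equals $1$ on the support of $\hat f_{\bar\theta}$; taking the inverse Fourier transform termwise turns each exponential times this cutoff into the modulated bump $\psi_T=|T|^{-1}e^{2\pi i\xi_\theta\cdot x}\phi_T$, yielding the exact identity $f_{\bar\theta}=\sum_T f_T\psi_T$. Your bookkeeping of the anisotropy (common tube length $R$, $|T|\,|\bar\theta|\sim1$, $\|\psi_T\|_{L^2}^2\sim|T|^{-1}$) is correct and carries over verbatim to this construction; it is the definition of $f_T$ and the source of the $\ell^2$ bound that need to be replaced.
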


\subsection{Square function estimates associated with $\Omega_{\lambda,0}$}\label{subs:square}

To estimate $\Vert {\mathcal E}g_{\Omega_{\lambda,0}}\Vert_{L^p(B_R)}$, we need the following known lemma.
\begin{lemma}[C\'{o}rdoba\cite{Co} and Fefferman\cite{F70}]\label{lem:corfef}
  For  $2\leq p \leq4$, there holds
  \begin{equation}\label{equ:corfef}
    \|f\|_{L^p(\R^2)}\lesssim \Big\|\big(\sum_{\alpha}\vert f_{\alpha}\vert^2\big)^{\frac12}\Big\|_{L^p(\R^2)},
  \;\;  f_{\alpha}:=\mathcal F^{-1}(\widehat{f}\chi_{\alpha}), \end{equation}
where $\alpha$ denotes $R^{-\frac12}\times R^{-1}$-slab and ${\rm supp}\;\hat{f}\subset \mathcal{N}_{{R^{-1}}}(P^1)$. Here $P^1$ denotes the standard parabola in $\mathbb{R}^2$ and $\mathcal{N}_{{R^{-1}}}(P^1)$ denotes $R^{-1}$-neighborhood of $P^1$.
\end{lemma}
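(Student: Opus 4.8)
The plan is to prove Lemma \ref{lem:corfef} by the classical square function argument of C\'ordoba: establish the endpoint estimates at $p=2$ and $p=4$ and interpolate. Fix the scale $R$ and the family $\{\alpha\}$ of caps, each an $R^{-1/2}\times R^{-1}$ plank tangent to $P^1$, so that $\mathcal N_{R^{-1}}(P^1)=\bigcup_\alpha\alpha$ with bounded overlap, and write $f_\alpha=\mathcal F^{-1}(\widehat f\chi_\alpha)$. It is convenient to read \eqref{equ:corfef} as the assertion that the \emph{linear} map $T\colon(g_\alpha)_\alpha\mapsto\sum_\alpha g_\alpha$, acting on sequences of functions with $\operatorname{supp}\widehat{g_\alpha}\subset\alpha$, is bounded from $L^p(\ell^2_\alpha)$ to $L^p(\mathbb R^2)$: since the $L^p(\ell^2)$ spaces form an interpolation scale and $T$ is honestly linear, it is enough to bound $T$ at $p=2$ and $p=4$, and then specialising to $g_\alpha=f_\alpha$ (and using $\sum_\alpha\chi_\alpha\ge\chi_{\operatorname{supp}\widehat f}$, so that $\sum_\alpha f_\alpha=f$) recovers $\|f\|_{L^p}\lesssim\|(\sum_\alpha|f_\alpha|^2)^{1/2}\|_{L^p}$.

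The case $p=2$ is immediate from Plancherel and the bounded overlap of the caps:
\[\Big\|\sum_\alpha g_\alpha\Big\|_{L^2(\mathbb R^2)}^2\ \lesssim\ \sum_\alpha\|g_\alpha\|_{L^2(\mathbb R^2)}^2\ =\ \Big\|\big(\textstyle\sum_\alpha|g_\alpha|^2\big)^{1/2}\Big\|_{L^2(\mathbb R^2)}^2.\]

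The main step is $p=4$, and this is where the geometry of the parabola enters essentially. Expanding the fourth power and applying Plancherel,
\[\Big\|\sum_\alpha g_\alpha\Big\|_{L^4(\mathbb R^2)}^4=\sum_{\alpha,\beta,\alpha',\beta'}\int_{\mathbb R^2}g_\alpha\,\overline{g_\beta}\,\overline{g_{\alpha'}}\,g_{\beta'}\,,\]
and a summand vanishes unless the frequency supports satisfy the closed-quadrilateral compatibility $(\alpha+\beta')\cap(\beta+\alpha')\neq\varnothing$. Parametrising each cap by the base point of its tangency to $P^1$, a short linear elimination shows that this condition forces the base points $a,b,c,d$ of $\alpha,\beta,\alpha',\beta'$ to obey $|b-d|\,|c-d|\lesssim R^{-1/2}$ — the quantitative form of the fact that the addition law $(s,s')\mapsto(s+s',\,s^2+s'^2)$ on $P^1$ is essentially two-to-one, since $ss'=\tfrac12\big((s+s')^2-(s^2+s'^2)\big)$. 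Away from the diagonal this pins $\beta$ (and hence, via the compatibility relations, $\alpha'$) to the $O(1)$ caps adjacent to $\beta'$, or symmetrically $\alpha'$ (and hence $\beta$) to the $O(1)$ caps adjacent to $\beta'$; the remaining clustered contribution, where all four caps lie in a common short subarc of $P^1$, is absorbed by a further dyadic decomposition in the separation scale of the caps (equivalently, by parabolic rescaling of subarcs together with induction on scales). After the pointwise bound $|g_\alpha||g_\beta|\lesssim|g_\alpha|^2+|g_\beta|^2$ on adjacent pairs, the quadruple sum collapses to
\[\sum_{\alpha,\alpha'}\int_{\mathbb R^2}|g_\alpha|^2\,|g_{\alpha'}|^2\ =\ \Big\|\big(\textstyle\sum_\alpha|g_\alpha|^2\big)^{1/2}\Big\|_{L^4(\mathbb R^2)}^4,\]
which is precisely the $p=4$ bound for $T$.

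Interpolating the linear operator $T$ between the $L^2(\ell^2)\to L^2$ and $L^4(\ell^2)\to L^4$ estimates yields $\|T\|_{L^p(\ell^2)\to L^p}\lesssim 1$ for all $2\le p\le 4$, and specialising to $g_\alpha=f_\alpha$ gives \eqref{equ:corfef}. The one genuinely delicate point is the $p=4$ overlap analysis — in particular the treatment of clusters of nearly coincident caps, which requires the additional multiscale/rescaling step — whereas the $p=2$ orthogonality and the interpolation are routine.
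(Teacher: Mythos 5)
The paper does not prove Lemma \ref{lem:corfef} at all: it is quoted as a classical result of C\'ordoba and Fefferman, so there is no internal argument to compare against. Your proposal is the standard biorthogonality proof of that classical result, and its skeleton ($L^2$ orthogonality, an $L^4$ estimate via Plancherel and the two-to-one addition law $ss'=\tfrac12\big((s+s')^2-(s^2+s'^2)\big)$ on the parabola, then vector-valued interpolation of the linear map $(g_\alpha)\mapsto\sum_\alpha g_\alpha$ from $L^p(\ell^2)$ to $L^p$) is correct. The one place where your sketch is weaker than the classical argument is the ``clustered'' case: invoking a dyadic decomposition in the cap separation, or parabolic rescaling with induction on scales, would typically cost a factor $\log R$ or $R^{\varepsilon}$, whereas \eqref{equ:corfef} as stated is lossless. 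No such device is needed: writing $\|\sum_\alpha g_\alpha\|_4^2=\|\sum_{\alpha,\beta}\widehat{g_\alpha}*\widehat{g_\beta}\|_2$ and checking that the sumsets $\alpha+\beta$ have $O(1)$ overlap (given $\xi\in\alpha+\beta$ one recovers $t+u$ exactly and $tu$ up to $O(R^{-1})$, hence $(t-u)^2$ up to $O(R^{-1})$; when $|t-u|\lesssim R^{-1/2}$ both caps are pinned near $\xi_1/2$, and when $|t-u|\gtrsim R^{-1/2}$ the error in $|t-u|$ is $O(R^{-1/2})$), one gets $\|\sum_\alpha g_\alpha\|_4^4\lesssim\sum_{\alpha,\beta}\|g_\alpha g_\beta\|_2^2=\|(\sum_\alpha|g_\alpha|^2)^{1/2}\|_4^4$ directly, with the near-diagonal quadruples handled automatically. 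Also note that $\sum_\alpha f_\alpha=f$ requires the slabs $\alpha$ to tile $\mathcal N_{R^{-1}}(P^1)$ (disjoint $\chi_\alpha$), not merely to cover it.
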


Using  Lemma \ref{lem:corfef}, we are able to prove the following result.

\begin{proposition}\label{prop:3.2}
For $2\leq p\leq 4$, it holds
\begin{equation}\label{equiform}
\|F\|_{L^p(\mathbb{R}^3)}\lesssim \Big\|\big( \sum_{\vartheta\subset \Sigma_{\lambda}}\vert F_{\vartheta}\vert ^2\big)^{\frac12} \Big\|_{L^p(\mathbb{R}^3)},
\end{equation}
where $\Sigma_{\lambda}:=\{(\xi_1,\xi_2,\xi_3): (\xi_1,\xi_2)\in \Omega_{\lambda,0}, \xi_3=\xi^4_1+\xi^4_2\}$,
${\rm supp}\;\hat{F}\subset\mathcal{N}_{R^{-1}}(\Sigma_{\lambda})$, $\vartheta$ is $\lambda^{-1}R^{-\frac12}\times R^{-\frac14}\times R^{-1}$-slab, and $F_{\vartheta}:=\mathcal F^{-1}(\widehat{F}\chi_{\alpha})$.
\end{proposition}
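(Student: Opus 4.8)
The plan is to deduce the three–dimensional square function estimate \eqref{equiform} from the two–dimensional Córdoba–Fefferman estimate of Lemma~\ref{lem:corfef} by freezing the $x_2$ variable, exactly as Proposition~\ref{prop:decoupling} was deduced from Lemma~\ref{lem:curve}. The key structural observation is that, since $\Omega_{\lambda,0}=[\lambda,2\lambda]\times[0,R^{-1/4}]$ has $\xi_2$–extent $R^{-1/4}$ and $\xi_2^4<R^{-1}$ there, the slabs $\vartheta$ never cut in the $\xi_2$–direction: each factors as $\vartheta=\tilde\vartheta\times[0,R^{-1/4}]$, where $\tilde\vartheta$ is a $\lambda^{-1}R^{-1/2}\times CR^{-1}$–slab inside $\mathcal N_{CR^{-1}}(\Gamma_\lambda)$ for the curve $\Gamma_\lambda=\{(\xi_1,\xi_1^4):\xi_1\in[\lambda,2\lambda]\}$. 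Fixing $x_2$ and setting $G_{x_2}(x_1,x_3):=F(x_1,x_2,x_3)$, I would argue as in \cite{Guth18} that $\operatorname{supp}\widehat{G_{x_2}}\subset\mathcal N_{CR^{-1}}(\Gamma_\lambda)$, using $|\xi_3-\xi_1^4|\le|\xi_3-\xi_1^4-\xi_2^4|+\xi_2^4\lesssim R^{-1}$ on $\operatorname{supp}\widehat F$, together with $(F_\vartheta)(\cdot,x_2,\cdot)=(G_{x_2})_{\tilde\vartheta}$ (after replacing the sharp cutoffs $\chi_\vartheta$ by smooth ones with the same support properties). Applying the sought planar estimate to each slice $G_{x_2}$, raising to the power $p$, and integrating in $x_2$ then yields \eqref{equiform}. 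Thus it remains to prove, for $2\le p\le4$,
\[\|G\|_{L^p(\R^2)}\lesssim\Big\|\big(\textstyle\sum_{\tilde\vartheta}|G_{\tilde\vartheta}|^2\big)^{1/2}\Big\|_{L^p(\R^2)},\qquad \operatorname{supp}\widehat G\subset\mathcal N_{CR^{-1}}(\Gamma_\lambda),\]
with $\tilde\vartheta$ ranging over the $\lambda^{-1}R^{-1/2}$–arcs of $\Gamma_\lambda$.

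To prove this I would rescale to a perturbed parabola. Writing $\xi_1=\lambda+\lambda u$ with $u\in[0,1]$ and composing with the affine map of $\R^2$ that removes the part of $\xi_1^4=\lambda^4(1+u)^4$ affine in $\xi_1$ and rescales the vertical axis by $\lambda^{-4}$, the curve $\Gamma_\lambda$ becomes $\gamma(u)=(u,6u^2+4u^3+u^4)$, $u\in[0,1]$, the arcs $\tilde\vartheta$ become arcs of length $\delta:=\lambda^{-2}R^{-1/2}$, and $\mathcal N_{CR^{-1}}(\Gamma_\lambda)$ becomes $\mathcal N_{C'\delta^2}(\gamma)$ since $\lambda^{-4}R^{-1}=\delta^2$. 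As $\gamma''\sim1$ on $[0,1]$, the $\delta$–arcs are precisely the $\delta^2$–flat caps of $\gamma$, so this is the Córdoba–Fefferman configuration with $R$ replaced by $\delta^{-2}$, and the required bound follows from the argument of Lemma~\ref{lem:corfef}: it is Plancherel at $p=2$; at $p=4$ one expands $\|\sum_J H_J\|_{L^4}^4=\|(\sum_J H_J)^2\|_{L^2}^2$ and uses that $\widehat{H_JH_{J'}}$ is supported in the Minkowski sum of the two $\delta\times\delta^2$–slabs of $J,J'$, with only $O(1)$ unordered pairs $\{J,J'\}$ contributing at a given location because $\gamma''$ is bounded above and below; interpolation covers $2\le p\le4$. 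The constant so produced depends only on these bounds on $\gamma''$, hence is uniform in $\lambda$ — equivalently, $\Gamma_\lambda$ has curvature $\sim\lambda^2$, comparable to a constant after parabolic rescaling. Undoing the changes of variables then gives \eqref{equiform}.

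\textbf{Main obstacle.} The genuinely delicate part will be the bookkeeping in the first step: one must verify that the slab family $\{\vartheta\}$ really factors as $\tilde\vartheta\times[0,R^{-1/4}]$ and that freezing $x_2$ commutes with the frequency localisations $F\mapsto F_\vartheta$ up to acceptable (Schwartz–tail) errors — this is exactly where the aspect ratio of $\Omega_{\lambda,0}$ and the bound $\xi_2^4\lesssim R^{-1}$ are used, and it is the reason the estimate is formulated on $\Omega_{\lambda,0}$ rather than on a general subregion of $\Omega_1$. A secondary, milder point is that Lemma~\ref{lem:corfef} is stated only for the standard parabola $P^1$, so one invokes its routine extension to curves with curvature $\sim1$; keeping the constant uniform in $\lambda$ is precisely why $\Omega_1$ was split into the dyadic pieces $\Omega_\lambda$ on which the curvature of $F^2_4$ is essentially constant.
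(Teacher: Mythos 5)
Your proposal is correct and follows essentially the same route as the paper: freeze the $x_2$ variable, note that the slabs do not subdivide in $\xi_2$ and that the projected Fourier support lies in an $O(R^{-1})$-neighborhood of $\Gamma_\lambda=\{(t,t^4):t\in[\lambda,2\lambda]\}$, then rescale $t=\lambda u+\lambda$ to the curvature-$\sim1$ curve $(u,6u^2+4u^3+u^4)$ and apply the C\'ordoba--Fefferman square function estimate (Lemma \ref{lem:corfef}), finally integrating in $x_2$. You in fact spell out the slab factorization and the uniformity in $\lambda$ a bit more explicitly than the paper does, but the argument is the same.
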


To prove Proposition \ref{prop:3.2}, it is sufficient to show the following lemma
 by freezing the $x_2$ variable.
\begin{lemma}\label{lem:gamwideg}
Let $\Gamma_{\lambda}=\{(t,t^4): t\in [\lambda,2\lambda]\}$ and $2\leq p\leq 4$. It holds
\begin{equation}\label{equ:gamgthet}
  \|G\|_{L^p(\mathbb{R}^2)}\lesssim \Big\|\big( \sum_{\beta}\vert G_{\beta}\vert ^2\big)^{\frac12} \Big\|_{L^p(\mathbb{R}^2)},
\end{equation}
where ${\rm supp}\;\hat{G}\subset\mathcal{N}_{R^{-1}}(\Gamma_{\lambda})$, $\beta$ is $\lambda^{-1}R^{-\frac12}\times R^{-1}$-slab, $G_{\beta}:=\mathcal F^{-1}(\widehat{G}\chi_{\beta})$
\end{lemma}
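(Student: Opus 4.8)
The plan is to derive Lemma \ref{lem:gamwideg} from the C\'ordoba--Fefferman square function estimate (Lemma \ref{lem:corfef}) by the generalized rescaling adapted to the arc $\Gamma_{\lambda}$, exactly as Lemma \ref{lem:curve} was reduced to Bourgain--Demeter decoupling. The structural observation is that on $[\lambda,2\lambda]$ the curve $t\mapsto t^{4}$ has curvature uniformly comparable to $\lambda^{2}$, since $\partial_{t}^{2}(t^{4})=12t^{2}\sim\lambda^{2}$ there; this is precisely why the curve-adapted slabs have $\xi_{1}$-width $\lambda^{-1}R^{-1/2}$ --- over such a slab the curve departs from its tangent line by only $\sim\lambda^{2}(\lambda^{-1}R^{-1/2})^{2}=R^{-1}$ --- and it means the plain dilation $t=\lambda v$, $v\in[1,2]$, straightens $\Gamma_{\lambda}$ into a \emph{fixed} non-degenerate arc $\Gamma'=\{(v,v^{4}):v\in[1,2]\}$, for which $\partial_{v}^{2}(v^{4})=12v^{2}\sim1$.

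On the Fourier side this dilation is the linear map $D=\operatorname{diag}(\lambda,\lambda^{4})$: writing $\widetilde G(z)=G(D^{-1}z)$, one checks that $\operatorname{supp}\widehat G\subset\mathcal N_{R^{-1}}(\Gamma_{\lambda})$ becomes $\operatorname{supp}\widehat{\widetilde G}\subset\mathcal N_{\widetilde R^{-1}}(\Gamma')$ with $\widetilde R:=\lambda^{4}R$, and that the slabs $\beta$ of dimension $\lambda^{-1}R^{-1/2}\times R^{-1}$ are carried bijectively onto the curve-adapted $\widetilde R^{-1/2}\times\widetilde R^{-1}$-slabs $\beta'$ for $\Gamma'$. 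Since the proof of Lemma \ref{lem:corfef} uses nothing about the parabola beyond non-vanishing curvature --- it is the $L^{4}$ orthogonality argument, which only needs that sums of two slabs have bounded overlap --- it applies verbatim with the parabola replaced by $\Gamma'$, giving $\|\widetilde G\|_{L^{p}(\mathbb R^{2})}\lesssim\big\|(\sum_{\beta'}|\widetilde G_{\beta'}|^{2})^{1/2}\big\|_{L^{p}(\mathbb R^{2})}$ for $2\le p\le4$. Undoing the dilation, which scales both sides by the same power of $\det D$, produces \eqref{equ:gamgthet}.

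Once Lemma \ref{lem:gamwideg} is in hand, Proposition \ref{prop:3.2} is deduced just as Proposition \ref{prop:decoupling} was obtained from Lemma \ref{lem:curve}: freeze $x_{2}$, set $G:=F(\cdot,x_{2},\cdot)$, observe that $\operatorname{supp}\widehat G$ sits in the projection of $\operatorname{supp}\widehat F$ to the plane $\xi_{2}=0$ and hence in $\mathcal N_{R^{-1}}(\Gamma_{\lambda})$, apply \eqref{equ:gamgthet} for each fixed $x_{2}$ after raising to the $p$-th power, and integrate in $x_{2}$, using Minkowski's inequality (legitimate since $p\ge2$) to move the $x_{2}$-integral inside the square function.

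I do not anticipate a genuine obstacle: the lemma is essentially a rescaled copy of C\'ordoba--Fefferman. The two points to check carefully are that the dilation really identifies $\mathcal N_{R^{-1}}(\Gamma_{\lambda})$ together with its $\beta$-slabs with $\mathcal N_{\widetilde R^{-1}}(\Gamma')$ together with its canonical slab decomposition --- which is exactly what the uniform bound $\partial_{t}^{2}(t^{4})\sim\lambda^{2}$ on $[\lambda,2\lambda]$ guarantees, and is the reason the whole estimate is organized dyadically through $\Omega_{1}=\bigcup_{\lambda}\Omega_{\lambda}$ --- and that Lemma \ref{lem:corfef} may legitimately be invoked for $\Gamma'$ in place of the model parabola, which is transparent from its proof but worth stating.
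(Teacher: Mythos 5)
Your argument is correct and is essentially the paper's own: an anisotropic rescaling of $\Gamma_{\lambda}$ to a unit-curvature arc at the effective scale $\widetilde R=\lambda^{4}R$, followed by the C\'ordoba--Fefferman square function estimate of Lemma \ref{lem:corfef}; the paper merely translates to the base point $\lambda$ first (working with the extension operator and an auxiliary thickening variable), so it lands on the perturbed parabola $6u^{2}+4u^{3}+u^{4}$ on $[0,1]$ rather than your quartic arc over $[1,2]$, but the slab dimensions $\widetilde R^{-1/2}\times\widetilde R^{-1}$ and the mechanism are identical. In either formulation one must invoke Lemma \ref{lem:corfef} for curves with curvature $\sim 1$ rather than literally the standard parabola --- a point the paper uses implicitly --- so your explicit observation that the $L^{4}$ biorthogonality argument needs only non-vanishing curvature is precisely the required justification.
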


With Lemma \ref{lem:gamwideg} in hand, we prove Proposition \ref{prop:3.2}. We denote $F(\cdot,x_2,\cdot)$ by $G$.  By the arguments in \cite{Guth18}, it is easy to see that ${\rm supp}\;\hat{G}$ is contained in the projection of ${\rm supp}\;\hat{F}$ on the plane $\xi_2=0$, that is, ${\rm supp}\;\hat{G}\subset \mathcal{N}_{R^{-1}}(\Gamma_{\lambda})$.  Integrating the both sides of \eqref{equ:gamgthet} with respect to $x_2$-variable from $-\infty$ to $\infty$, we obtain \eqref{equiform}, as required.

\begin{proof}[{\bf Proof of Lemma \ref{lem:gamwideg}:}]
  It is sufficient to prove
  \[\Big\|\int^{R^{-1}}_0\mathcal E_{[\lambda,2\lambda]}h(y,s)ds\Big\|_{L^p(\mathbb{R}^2)}\lesssim \Big\|\Big( \sum_{I:\lambda^{-1}R^{-\frac12}-interval}\Big| \int^{R^{-1}}_0\mathcal E_{I}h(y,s)ds\Big| ^2\Big)^{\frac12} \Big\|_{L^p(\mathbb{R}^2)},\]
  where
  \[\mathcal E_{[\lambda,2\lambda]}h(y,s):=\int_{[\lambda,2\lambda]}h(t)e[y_1t+y_2(t^4+s)]dt.\]
  Using variable substitutions, $t=\lambda u+\lambda, s=\lambda^4 v$,  denoted by $\mathcal{L}$, we have
  \[\Big\vert\int^{R^{-1}}_0(\mathcal E_{[\lambda,2\lambda]}h)(y,s)ds\Big\vert=:\Big\vert\lambda^{-1}\int^{\lambda^{-4}R^{-1}}_0(\mathcal E^{\rm Parp}_{[0,1]}\tilde{h})(\tilde{y},v)dv\Big\vert,\]
  where
  \[\tilde{h}(u,v)=\lambda h(\lambda u+\lambda, \lambda^4 v), \;\;\; \tilde{y}=(\lambda y_1+4\lambda^4y_2,\lambda^4y_2),\]
  and $\mathcal E^{\rm Parp}_{[0,1]}\tilde{h}(\tilde{y},v)$ denotes the Fourier extension operator associated with the non-degenerate curve: \[\gamma(u)=\big(u,6u^2+4u^3+u^4+v\big), \;\;\; u\in [0,1], \;\;\; v\in [0,\lambda^{-4}R^{-1}].\]
  Now we apply Lemma \ref{lem:corfef} to $\int^{\lambda^{-4}R^{-1}}_0\mathcal{E}^{\rm Parp}_{[0,1]}\tilde{h}(\tilde{y},v)dv$. Since
  \[\Big\Vert \int^{\lambda^{-4}R^{-1}}_0\mathcal E_{[\lambda,2\lambda]}h(y,s)ds \Big\Vert^p_{L^p(\mathbb{R}^2)}=\lambda^{-1}\Big\Vert \int^{\lambda^{-4}R^{-1}}_0\mathcal{E}^{\rm Parp}_{[0,1]}\tilde{h}(\tilde{y},v)dv\Big\Vert^p_{L^p(\mathbb{R}^2)},\]
  from Lemma \ref{lem:corfef}, the right hand side of last equality is controlled by
  \[\lambda^{-1}\Big\|\Big(\sum_{\tilde{I}:\lambda^{-2}R^{-\frac12}-{\rm interval}}\big\vert\ \int^{\lambda^{-4}R^{-1}}_0\mathcal{E}^{\rm Parp}_{\tilde{I}}\tilde{h}(\tilde{y},v)dv\big\vert^2\Big)^{\frac12}\Big\|^p_{L^p(\mathbb{R}^2)}.\]
  Taking the inverse of $\mathcal{L}$, we get the desired estimate \eqref{equ:gamgthet}.
 \end{proof}

\subsection{An argument using 2D Kakeya-type estimate}

In the discussion below, our strategy is to interpolate a $L^4$ estimate with a trivial $L^2$ bound. Fix $R\gg 1$ and fixed bump function $\varphi\in C^{\infty}_c(\mathbb{R}^3)$ with ${\rm supp}\; \varphi \subset B(0,1)$ and $\vert  \check{\varphi}(x) \vert\geq 1$ for all $x\in B(0,1)$.
Defining $f:= \check{\varphi}_{R^{-1}} {\mathcal E}g_{\Omega_{\lambda,0}}$ where $\varphi_{R^{-1}}(\xi):= R^3\varphi(R\xi)$, it follows
\begin{equation}\label{eq-add-0}
\|{\mathcal E}g_{\Omega_{\lambda,0}}\|_{L^p(B_R)}\lesssim
\|\check{\varphi}_{R^{-1}}{\mathcal E}g_{\Omega_{\lambda,0}}\|_{L^p(\mathbb{R}^3)}=\|f\|_{L^p(\mathbb{R}^3)}.
\end{equation}

By Proposition \ref{prop:3.2} with $p=4$, we have
\begin{equation}\label{equ:egome0est123}
  \|f\|_{L^4(\mathbb{R}^3)}\lesssim \Big\|\big( \sum_{\bar{\theta}}\vert f_{\bar{\theta}}\vert ^2\big)^{\frac12} \Big\|_{L^4(\mathbb{R}^3)},
\end{equation}

Applying the wave packet decomposition to the function $f_{\bar{\theta}}$
 \[f_{\bar{\theta}}(x)=\sum_{T\in \mathbb{T}(\bar{\theta})}f_{T}\psi_{T}(x),\]
we get by Lemma \ref{lem:hickmanvik}
\begin{equation}\label{equ:wavepakhv123}
  \Big(\sum_{T\in \mathbb{T}(\bar{\theta)}}\vert f_{T} \vert^2\Big)^{\frac12}\lesssim \|\hat{f}_{\bar{\theta}}\|_{L^2_{\rm avg}(\bar{\theta})}.
\end{equation}
This inequality together with \eqref{eq-add-0} implies that
\begin{equation}\label{equ:kakeyamax}
 \|{\mathcal E}g_{\Omega_{\lambda,0}}\|_{L^4(B_R)}\lesssim \lambda^{-1}R^{-\frac74}\Big\| \Big[\sum_{\bar{\theta}}\Big(\sum_{T\in \mathbb{T}(\bar{\theta})}\vert f_{T}\vert\cdot \vert\psi_{T}\vert\Big)^2\Big]^{\frac12}\Big\|_{L^4(\mathbb{R}^3)}
\end{equation}
Let $\chi_{T,\ell}$ denote the characteristic  function of $a_T([-\frac14,\frac14]^n+\frac{\ell}2)$
for each $\ell\in\mathbb Z^n$ such that the $\{\chi_{T,\ell}\}_{\ell\in \mathbb Z^n}$ form
a rough partition of unity of $\R^n$. Note that
\begin{equation*}
  \vert \psi_{T}(x) \vert = \sum_{\ell\in \mathbb{Z}^3}\vert \psi_{T}(x) \vert\chi_{T,\ell}(x)\lesssim \sum_{\ell\in \mathbb{Z}^3}\frac{\chi_{T,\ell}(x)}{(1+\vert \ell \vert)^4},
\end{equation*}
we have
\begin{align*}
\|{\mathcal E}g_{\Omega_{\lambda,0}}\|_{L^4(B_R)} \lesssim& \lambda^{-1}R^{-\frac74}\sum_{\ell\in \mathbb{Z}^3}(1+\vert \ell \vert)^{-4}\Big\|\Big[\sum_{\bar{\theta}}\Big(\sum_{T\in \mathbb{T}(\bar{\theta})}\vert f_{T}\vert \chi_{T,\ell}\Big)^2\Big]^{\frac12} \Big\|_{L^4(\mathbb{R}^3)}\\
\lesssim &\lambda^{-1}R^{-\frac74}\sum_{\ell\in \mathbb{Z}^3}(1+\vert \ell \vert)^{-4}\Big\|\sum_{\bar{\theta}}\sum_{T\in \mathbb{T}(\bar{\theta})}\vert f_{T}\vert^2\chi_{T,\ell} \Big\|^{\frac12}_{L^2(\mathbb{R}^3)},
\end{align*}
where we have used the fact that the supports of the $\chi_{T,\ell}$ are essentially disjoint as $T$ varies over $\mathbb T(\bar \theta)$.
Assume that $\|\hat{f}\|_{L^{\infty}(\mathcal{N}_{R^{-1}}(\Sigma_{\lambda}))}=1$, we
have by \eqref{equ:wavepakhv123}
$$\Big(\sum_{T\in \mathbb{T}(\bar{\theta})}\vert f_{T} \vert^2\Big)\lesssim \|\hat{f}\|^2_{L^2_{\rm avg}(\bar{\theta})}\leq 1$$
for each $\bar{\theta}$. There exists a sequences $\{c_{T}\}_{T\in \mathbb{T}(\bar{\theta})}$ of non-negative real numbers with
\begin{equation}\label{eq-add-2}\sum_{T\in \mathbb{T}(\bar{\theta})}c_{T}=1,\end{equation}
such that
\[\Big\|\sum_{\bar{\theta}}\sum_{T\in \mathbb{T}(\bar{\theta})}\vert f_{T}\vert^2\chi_{T,\ell} \Big\|^{\frac12}_{L^2(\mathbb{R}^3)}\lesssim \Big\|\sum_{\bar{\theta}}\sum_{T\in \mathbb{T}(\bar{\theta})}c_{T}\chi_{T,\ell} \Big\|^{\frac12}_{L^2(\mathbb{R}^3)}.\]
For each $\bar{\theta}$, let $\{S_{T}: T\in \mathbb{T}(\bar{\theta})\}$ be a collection of pairwise disjoint subsets of $[0,1]$ with $\vert S_{T}\vert = c_{T}$. Using H\"{o}lder's inequality and the condition \eqref{eq-add-2}, we get
\begin{align*}
\Big\|\sum_{\bar{\theta}}\sum_{T\in \mathbb{T}(\bar{\theta})}c_{T}\chi_{T,\ell}\Big\|^{2}_{L^2(\mathbb{R}^3)}=&
\int_{\mathbb{R}^3}\Big(\int^1_0 \sum_{\bar{\theta}}\sum_{T\in \mathbb{T}(\bar{\theta})}\chi_{T}(x)\chi_{S_{T}}(t)dt\Big)^2dx \\ \leq &
\int^1_0 \Big[\int_{\mathbb{R}^3}\Big(\sum_{\bar{\theta}}\sum_{T\in \mathbb{T}(\bar{\theta})}\chi_{T}(x)\chi_{S_{T}}(t)\Big)^2dx\Big]dt
\end{align*}
By the definition of $S_T$, for any fixed $\bar{\theta}$ and every $t\in [0,1]$, there exists at most one $T_{\bar{\theta}}\in \mathbb{T}(\bar{\theta})$ such that $\chi_{S_{T_{\bar{\theta}}}}(t)\neq 0$. Hence, the right hand side of the last inequality is controlled by
\[\int^1_0\big\|\sum_{\bar{\theta}}\chi_{T_{\bar{\theta}}}\big\|^2_{L^2(\mathbb{R}^3)}dt.\]

\begin{proposition}\label{prop:2dkinq}
There holds
\begin{equation}\label{equ:theatest}
 \big \|\sum_{\bar{\theta}}\chi_{T_{\bar{\theta}}}\big\|^2_{L^2(\mathbb{R}^3)}\lesssim \lambda^{\varepsilon}\lambda R^{\varepsilon}R^{\frac94}.
\end{equation}

\end{proposition}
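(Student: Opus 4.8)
The plan is to prove \eqref{equ:theatest} by the standard C\'ordoba-type $L^2$ overlap argument for the Kakeya-type maximal function attached to $\{T_{\bar\theta}\}$, after first reducing it to a two-dimensional estimate. Recall the geometry of the tubes: since the $\xi_2$-extent of $\Omega_{\lambda,0}$ equals $R^{-1/4}$, which is already the $\xi_2$-side length of a slab, there is exactly one slab $\bar\theta$ per $\xi_1$-interval of length $\lambda^{-1}R^{-1/2}$ in $[\lambda,2\lambda]$, so $\bar\theta$ ranges over a set of cardinality $N\sim\lambda^2R^{1/2}$; each $T_{\bar\theta}\in\mathbb T(\bar\theta)$ is a single tube of dimensions $\lambda R^{1/2}\times R^{1/4}\times R$ with long axis parallel to the normal of $\Sigma_\lambda$, i.e. to $(4\xi_1^3,4\xi_2^3,-1)$, at the relevant base point. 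Because $\xi_1\sim\lambda$ and $0\le\xi_2\le R^{-1/4}$, the $x_2$-component of this normal is $O(R^{-3/4})$, so $T_{\bar\theta}$ drifts in the $x_2$ variable by at most $O(R^{1/4})$ over its length $R$, no more than its own $x_2$-width. Writing $\pi\colon\R^3\to\R^2$ for the projection onto the $(x_1,x_3)$-plane, this yields $|T_{\bar\theta}\cap T_{\bar\theta'}|\lesssim R^{1/4}\,|\pi(T_{\bar\theta})\cap\pi(T_{\bar\theta'})|$, where $\pi(T_{\bar\theta})$ is contained in a $\lambda R^{1/2}\times R$ rectangle $\tilde T_{\bar\theta}$ oriented along the $(x_1,x_3)$-projection of the normal. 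Summing over pairs, $\|\sum_{\bar\theta}\chi_{T_{\bar\theta}}\|_{L^2(\R^3)}^2=\sum_{\bar\theta,\bar\theta'}|T_{\bar\theta}\cap T_{\bar\theta'}|\lesssim R^{1/4}\|\sum_{\bar\theta}\chi_{\tilde T_{\bar\theta}}\|_{L^2(\R^2)}^2$, so it suffices to prove
\[\Big\|\sum_{\bar\theta}\chi_{\tilde T_{\bar\theta}}\Big\|_{L^2(\R^2)}^2=\sum_{\bar\theta,\bar\theta'}|\tilde T_{\bar\theta}\cap\tilde T_{\bar\theta'}|\lesssim\lambda^3R^2\log R .\]

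To prove this, I would separate the diagonal and off-diagonal contributions. The diagonal gives $N\,|\tilde T_{\bar\theta}|=\lambda^2R^{1/2}\cdot\lambda R^{3/2}=\lambda^3R^2$. For the off-diagonal part the key input is the separation of the tube directions: the direction of $\tilde T_{\bar\theta}$ has slope $4\xi_1^3$ with $\xi_1$ ranging over a $\lambda^{-1}R^{-1/2}$-separated subset of $[\lambda,2\lambda]$, and since $\tfrac{d}{d\xi_1}(4\xi_1^3)=12\xi_1^2\sim\lambda^2$ there, slabs that are $k$ intervals apart have directions separated by an angle comparable to $k\lambda R^{-1/2}$. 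A pair of $\lambda R^{1/2}\times R$ rectangles whose long axes make an angle $\alpha\ge\lambda R^{-1/2}$ meet in a set of area $\lesssim(\lambda R^{1/2})^2/\alpha=\lambda^2R/\alpha$, so
\[\sum_{\bar\theta\ne\bar\theta'}|\tilde T_{\bar\theta}\cap\tilde T_{\bar\theta'}|\lesssim\lambda^2R\sum_{\bar\theta}\sum_{k\ge1}\frac{1}{k\lambda R^{-1/2}}\lesssim\lambda^2R\cdot N\cdot\frac{R^{1/2}}{\lambda}\log N\lesssim\lambda^3R^2\log R .\]
Adding the two contributions yields $\|\sum_{\bar\theta}\chi_{\tilde T_{\bar\theta}}\|_{L^2(\R^2)}^2\lesssim\lambda^3R^2\log R$, hence $\|\sum_{\bar\theta}\chi_{T_{\bar\theta}}\|_{L^2(\R^3)}^2\lesssim\lambda^3R^{9/4}\log R$. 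Finally, since $\lambda\le\tfrac12$ gives $\lambda^{2-\varepsilon}\le1$ and $\log R\lesssim_\varepsilon R^\varepsilon$, one has $\lambda^3R^{9/4}\log R\lesssim_\varepsilon\lambda^{1+\varepsilon}R^{9/4+\varepsilon}=\lambda^\varepsilon\lambda R^\varepsilon R^{9/4}$, which is \eqref{equ:theatest}.

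The main obstacle is to justify the direction-separation statement rigorously: one must verify that the $N$ slab directions --- dictated by the finite-type decomposition $\mathcal F_3(R,4)$ rather than by a uniform partition of $[\lambda,2\lambda]$ --- really are $\gtrsim\lambda R^{-1/2}$-separated. This is precisely where the choice of interval length $\lambda^{-1}R^{-1/2}$ in the $\xi_1$-variable, matched to the curvature $12\xi_1^2\sim\lambda^2$ of $\Sigma_\lambda$, is essential, and it is the reason the common tube length $R$ recorded in Remark \ref{equ:rem:scale} is needed. A secondary technical point is to make the passage from $\R^3$ to $\R^2$ precise, checking that the $O(R^{-3/4})$-size $\xi_2$-slope of the normal genuinely produces an $x_2$-drift of size $\lesssim R^{1/4}$, so that both $|T_{\bar\theta}\cap T_{\bar\theta'}|\lesssim R^{1/4}|\pi(T_{\bar\theta})\cap\pi(T_{\bar\theta'})|$ and the containment of $\pi(T_{\bar\theta})$ in a $\lambda R^{1/2}\times R$ rectangle hold.
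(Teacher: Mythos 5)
Your proof is correct and takes essentially the same route as the paper, namely C\'ordoba's $L^2$ overlap-counting with the angular separation of the tube directions dictated by the $\lambda^{-1}R^{-1/2}$ slab width: the paper sums $|T_{\bar{\theta}_1}\cap T_{\bar{\theta}_2}|\lesssim \lambda R^{7/4}/j$ over angles $\approx jR^{-1/2}$ directly in $\mathbb{R}^3$, while you first project out $x_2$ (extracting the $R^{1/4}$ factor) and run the identical two-dimensional computation. Your more careful tracking of the separation $k\lambda R^{-1/2}$ and of the number of slabs $\sim\lambda^2R^{1/2}$ even gives the slightly stronger bound $\lambda^3R^{9/4}\log R$, which of course implies \eqref{equ:theatest}.
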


\begin{proof}
  We will apply C\'ordoba's Kakeya-type argument in $\R^2$ to estimate the left hand side of \eqref{equ:theatest}.
   Suppose that
\[{\rm Angle}({\rm dir}(T_{\bar{\theta}_1}),{\rm dir}(T_{\bar{\theta}_2}))\approx jR^{-\frac12},\;\;
\;1\leq j \lesssim R^{\frac12},\;\;\; j\in \mathbb{N}. \]
Since $T_{\bar{\theta}}$  is  $R \times \lambda R^{\frac12}\times R^{\frac14}$-rectangle, we have
\[\mid T_{\bar{\theta}_1}\cap T_{\bar{\theta}_2}\mid \lesssim \tfrac{\lambda R^{7/4}}{j}.\]
It follows that
\begin{align*}
   \Big\|\sum_{\bar{\theta}}\chi_{T_{\bar{\theta}}}\Big\|^2_{L^2(\mathbb{R}^3)}=&\int_{\R^3} \sum_{\bar{\theta}_1,\bar{\theta}_2}\chi_{T_{\bar{\theta}_1}}\chi_{T_{\bar{\theta}_2}}dx
   =\sum_{\bar{\theta}_1,\bar{\theta}_2}\big(\big| T_{\bar{\theta}_1}\cap T_{\bar{\theta}_2}\big|\big)\\
   =&\sum_{\bar{\theta}_1}\Big(\sum_{\bar{\theta}_2}\big| T_{\bar{\theta}_1}\cap T_{\bar{\theta}_2}\big|\Big)
   \lesssim \sum_{\bar{\theta}_1}\sum^{[R^{\frac12}]}_{j=1}\frac{\lambda R^{\frac74}}{j}\\
   \approx& \lambda R^{\frac74}\log R\cdot \mathrm{Card}(T_{\bar{\theta}_1}) \lesssim\lambda R^{9/4}\log R.
\end{align*}
This completes the proof of Proposition \ref{prop:2dkinq}.

\end{proof}

From the above discussion, we obtain
\begin{equation}\label{equ:egbrest}
  \|\mathcal{E}g_{\Omega_{\lambda,0}}\|_{L^4(B_R)}\lesssim_{\lambda}R^{\varepsilon}R^{-\frac{3}{16}}\|g\|_{L^{\infty}(\Sigma)}.
\end{equation}
Interpolating it with a trivial $L^2$-bound
\begin{equation}\label{equ:l2estboud}
  \|\mathcal{E}g_{\Omega_{\lambda,0}}\|_{L^2(B_R)}\lesssim_{\lambda}R^{\frac{1}{2}}\|g_{\Omega_{\lambda,0}}\|_{L^2}
  \lesssim_{\lambda}R^{\frac38}\|g\|_{L^{\infty}},
\end{equation}
we obtain
\begin{equation}\label{equ:omega0rest}
  \|\mathcal{E}g_{\Omega_{\lambda,0}}\|_{L^p(B_R)}\lesssim C(K)R^{\frac{9}{4p}-\frac{3}{4}+\varepsilon}\|g\|_{L^{\infty}(\Sigma)},
\end{equation}
where $C(K)$ is a fixed positive power of $K$ and we have used the fact  $K^{-\frac14}\leq \lambda \leq \frac{1}{2}$.
This inequality  yields  the claim  \eqref{equ:claimlambda0}.
And so, we conclude the proof of Theorem \ref{thm:main}.




\subsection*{Acknowledgements}   C.~Miao and J.~Zheng were supported by NSF of China No.11831004. The authors are also grateful to Professor Shaoming Guo for his valuable discussion.

\bigskip

\begin{center}

\end{center}

\end{document}